\newtheorem{remark}{Remark}[section]
\newtheorem{prop}{Proposition}[section]
\newtheorem{corollary}{Corollary}[section]
\newtheorem{theorem}{Theorem}[section]
\newtheorem{lemma}{Lemma}[section]
\newtheorem{assumption}{Assumption}[section]
\newtheorem{defn}{Definition}[section]
\newcommand{\jump}[1]{\ensuremath{\left[\!\!\left[#1\right]\!\!\right]} }
\newcommand{\sign}{\text{\rm sign}}
\newcommand{\Hs}{\mathcal{H}}
\begin{document}

\begin{frontmatter}

\title{Fracture Model Reduction and Optimization for Forchheimer Flows in Reservoir}

\author{P.J.~Paranamana}
\ead{pushpi.paranamana@ttu.edu}
\address{Department of Mathematics and Statistics, Texas Tech University, Lubbock, TX, 79409-1042}
\author{E.~Aulisa\corref{cor1}}
\ead{eugenio.aulisa@ttu.edu}
\address{Department of Mathematics and Statistics, Texas Tech University, Lubbock, TX, 79409-1042}
\author{A.~Ibragimov\corref{}}
\ead{akif.ibraguimov@ttu.edu}
\address{Department of Mathematics and Statistics, Texas Tech University, Lubbock, TX, 79409-1042}
\author{M.~Toda\corref{}}
\ead{magda.toda@ttu.edu}
\address{Department of Mathematics and Statistics, Texas Tech University, Lubbock, TX, 79409-1042}

\cortext[cor1]{Corresponding author}

\begin{abstract}
In this study, we analyze the flow filtration process of slightly compressible fluids in fractured porous media. We model the coupled fractured porous media system, where the linear Darcy flow is considered in porous media and the nonlinear Forchheimer equation is used inside the fracture. 

Flow in the fracture is modeled as a reduced low dimensional BVP which is coupled with an equation in the reservoir. We prove that the solution of the reduced model can serve very accurately to approximate the solution of the actual high-dimensional flow in reservoir fracture system, because the thickness of the fracture is small. In the analysis we consider two types of Forchhemer flows in the fracture: 
isotropic and anisotropic, which are different in their nature.

Using method of reduction, we developed a formulation for an optimal design of the fracture, which  maximizes the capacity of the fracture in the reservoir with fixed geometry. Our method, which is based on a set point control algorithm, explores the coupled impact of the fracture geometry and $\beta-$Forchheimer coefficient. 
\end{abstract}

\begin{keyword}
non-linear flow analysis \sep Darcy-Forchheimer equations \sep fracture modeling \sep convergence analysis \sep optimization \sep diffusive capacity
\MSC[2010] 35J25 \sep 76S05 \sep 76D
\end{keyword}

\end{frontmatter}
	
\author{Pushpi J. Paranamana, Eugenio Aulisa, Akif Ibragimov, Magdalena Toda}
	




\section{Introduction}

The mathematical modeling and analysis of strongly coupled nonlinear problems in fractured 
porous media represent very valuable tools towards improving the modern methodologies of oil/gas recovery which use fracturing.  Modern technologies allow numerous man-made fractures, in order to increase the productivity of the reservoir. 
Identifying an optimal fracture configuration for a given reservoir geometry is the major task in the fracturing technology. 
Due to the complexity of the parameters involved (such as the geometry of the reservoir, geometry of the fracture, and the velocity and the pressure of the nonlinear flow), finding an optimal length of the fracture that maximizes productivity is challenging. 

Due to the fractures in the porous block, the velocity of filtration is higher, and therefore 
the inertial forces alter the linear relation between the velocity and the pressure gradient in Darcy's law
\cite{muskat1938flow, forchheimer1901wasserbewegung, dake1983fundamentals, bear2013dynamics}. Therefore, studying the flow with non-Darcy effects is necessary \cite{whitaker1996forchheimer, balhoff2007predictive, miskimins2005non, polubarinova2015theory, ruth1992derivation}.
The Darcy-Forchheimer equation (which demonstrates the momentum conservation), the continuity equation and the thermodynamical equation of the state between fluid density and pressure represent the building blocks towards modeling the flow in the reservoir domain \cite{aulisa2009mathematical, dake1983fundamentals, muskat1938flow}. We use a generalized nonlinear Forchheimer equation that serves as the momentum equation, in order to describe the hydrodynamics in fractured porous media. \cite{payne1999convergence, payne1999continuous, payne1997spatial, payne1996stability}. 

In this work, we discuss slightly compressible fluids with small permeability coefficients where the dissipative effect is dominant, as in natural reservoirs \cite{muskat1938flow}.
Since our applications are in geophysics and reservoir engineering, we consider fluid flows in the domain with different regimes of filtration in fracture and porous blocks, bounded by exterior impermeable boundary and interior  boundary representing well.

To analyze the notion of well productivity index (PI), that is used in reservoir engineering to measure the capacity of the well and the available reserves \cite{dake1983fundamentals, raghavan1993well}, we introduce an integral functional called Diffusive Capacity defined as total flux on the well surface divided by the pressure drawdown (difference between average pressures of the reservoir domain and the well boundary). Diffusive capacity mathematically is equal to PI.

Diffusive capacity is time independent in general. 
However, regardless of initial conditions, the Diffusive Capacity stabilizes to a constant value in long term dynamics \cite{muskat1938flow, raghavan1993well}. We analyze  the regime with constant well production $Q$, which in general gives transient diffusive capacity. Even though the diffusive capacity depends on the initial data, observations from reservoir engineering show that, this time dependent characteristic 
stabilizes to  the diffusive capacity of the PSS regime which in a sense serves as an attractor for the transient one \cite{dake1983fundamentals, raghavan1993well}. In case of PSS regime, pressure distribution can be represented as $P_{PSS(x,t)}=-At+W(x),$ where $W(x)$ is a solution of specific steady state boundary value problem (BVP), and corresponding steady state diffusive capacity uniquely can be calculated  as $\frac{Q}{\int_\Omega W(x)dx}$, where $\Omega$ is the size of the reservoir. In this article, we analyze the impact of the complex geometry of the fracture and non-linearity of the flow, on the value of steady state diffusive capacity. We explore the fractured reservoir domain, where the linear Darcy law is considered in the reservoir and the non linear Forchheimer equation is considered in the fracture.

We consider a reservoir/well system in which the vertical flow is negligible. Therefore, the domain of the flow is essentially two-dimensional. Understanding the behavior of hydrodynamical properties in a simple setting will provide a better insight into understanding the behavior of more complex, higher dimensional fluid structure problems.
We consider the  case when the fracture can be modeled as a thin but long rectangle with thickness $h$ and length $L$. 
For  man-made and natural geological fractures, the thickness ("opening of the fracture") is less than $0.1 mm$, while the characteristic reservoir size can be larger than $500 m$, therefore, solving the original two dimensional problem is numerically almost impossible. At the same time, velocity along the fracture length  is much higher than the velocity inside porous media towards fracture. 

Engineers often in practice, simplify the problem in 2-D modeling of reservoir fracture system and treat the fracture as 1-D sink with pressure equal to the value of the pressure on the well. 
This assumption leads to a significant over estimation of fracture capacity.

The goal of the research is to consider the actual nature of the problem which includes the nature of the flow inside the fracture and couple it with the flow in the porous media.   
We introduce a novel model of 1-D non-linear flow inside fracture and a source term distributed along the length. This source term coupled with the flow in the porous block, explicitly depend on the fracture thickness $h$ and the $\beta$ coefficient of the Forchheimer equation in the fracture. Then we analyze the difference between the solutions of the original problem with 2-D fracture and the one-dimensional one, in order to justify the introduction of the reduced model. We investigate isotropic and anisotropic flows in the fracture, separately.
For the isotropic case for a given fracture, we prove the closeness of the two solutions of the original and the reduced 1-D model. 
In the anisotropic case, we consider the non-linear Forcheimer equation only along the fracture while the flow remains linear in the direction perpendicular to the fracture. For this case we prove a stronger result. Namely, even if the individual solutions are unbounded as the fracture thickness approaches zero, the difference between the solutions of the two problems is bounded, and does not depend on the size of the fracture thickness.

As a conclusion we prove the following Engineering statement.\\
\textit{Since the fracture thickness is several orders of magnitude smaller than the dimension of the reservoir, 
the reduced one dimensional model of the fracture provides an accurate value of the capacity of the actual 2-D fracture, 
and can take into account the Forchheimer coefficient $\beta,$ the thickness and the length of the fracture.   } 

We apply our method to investigate an optimization problem, which is of vital importance in fracturing technology.   
Using the developed model, we explore the optimization problem for the capacity of the fracture, 
with respect to its length, thickness, and Forchheimer coefficient $\beta$. 
As optimization factor, we use the diffusive capacity. The goal is to investigate the optimal fracture length 
that maximizes the diffusive capacity.

We solve the optimization problem in last section of the article, as the following inverse problem. 

\textbf{Inverse Problem:}
\textit{ For a given length of the fracture, a prescribed pressure drawdown ``PDD'' in the reservoir, and a Forchheimer $\beta$ parameter in the fracture, find the value of the total rate of production $Q$ such that the solution of the pseudo-steady state problem has the prescribed value of ``PDD''.}

It is evident that for linear Darcy flow ``PDD'' is linear with $Q$ and therefore the problem is much simpler.
To solve this problem for non-linear flows, we develop a mathematical formulation using a set point control algorithm \cite{aulisa2014numerical, aulisa2015practical}, and implement corresponding numerical schemes. For a fixed pressure drawdown, we find the diffusive capacity for different fracture lengths and various nonlinear flows. Numerical simulations were performed in order to calculate the diffusive capacity of a reservoir with a horizontal fracture - for different reservoir geometries. We analyze how the diffusive capacity changes as the fracture length and nonlinear term of the flow vary.

\section{Formulation of the Problem}
\subsection{Reservoir/Porous Media Modeling}
The pressure function plays a vital role in the oil filtration process in porous media bounded by the reservoir boundary and the well surface. Oil/gas is drained by the well built in the bounded reservoir domain. While small rate of productions, which are characterized by low velocity flows, are associated to the Darcy's law, fluid flows with high production rates give turbulent regimes, which create non Darcy phenomena \cite{forchheimer1901wasserbewegung, muskat1938flow, dake1983fundamentals, bear2013dynamics, balhoff2010polynomial}. Although there are various methods to model the non-Darcy case, the genaral Brinkman-Forchheimer equation appeares to be the most suitable \cite{ewing1999numerical, douglas1993generalized, forchheimer1901wasserbewegung, payne1999convergence}.

Let  $\mathbf{v}$ be the velocity vector field and $p$ be the pressure in porous media. The time dependent  Brinkman-Forchheimer equation is given by \cite{payne1999convergence, aulisa2009mathematical},
\begin{equation}
\left(\rho c_a \frac{\partial \mathbf{v}}{\partial t}- \mu \Delta \mathbf{v} \right)+\frac{\mu}{k}\mathbf{v}+\beta |\mathbf{v}|\mathbf{v}=-\nabla p ,\mbox{ with} \,
\beta=\frac{F\rho \phi}{k^{1/2}} \label{B-F eqn}\,\,, \end{equation}
where $c_a$ is the acceleration coefficient, $F$ is the Forchheimer coefficient, $\phi$ is the porosity, $k$ is the permeability, $\mu$ is the viscosity, and $\rho$ is the density of the fluid.
Eq.\eqref{B-F eqn} is interpreted as the momentum conservation equation. 

\begin{remark}\label{inhom}
In our intended application the parameters of the reservoir and fracture are isotropic and spacial  dependent. Namely, 
$k=k_p$ and $\beta=0$ in the porous block, and $k=k_f$ and $\beta\neq 0$ in the fracture.
\end{remark}

The continuity equation is given by,
\begin{equation}
\frac{\partial \rho}{\partial t}+\nabla \cdot (\rho \mathbf{v})=0\,\,.
\end{equation}

We introduce some constraints (assumptions) and reduce the above equations to a scalar quasi-linear parabolic equation for pressure only.

\begin{assumption}\label{dissipation}
The permeability coefficient $k$ is assumed to be very small, meaning that the dissipation in the porous media is dominant. That is, the first two terms on the L.H.S. of Eq.\eqref{B-F eqn} (inertial and viscous terms) are assumed to be negligible. This is the case for natural reservoirs \cite{muskat1938flow}.
\end{assumption}

\begin{assumption}\label{slightly com}
	The fluid is assumed to be slightly compressible and satisfies the equation of state given by
	\begin{equation}
	\rho^{\prime}=\gamma^{-1}\rho \,\,\,\,\,\,\left(\rho={\rho_0}\exp^{{\gamma^{-1}}(p-p_0)}\right)\,\,,
		\end{equation}
		where $\gamma$ is the compressibility constant of the fluid.
\end{assumption}
From assumptions \ref{dissipation} - \ref{slightly com}, we obtain the following system of equations:
\begin{align}
\rho^{\prime}\frac{\partial p}{\partial t}=-\rho \nabla\cdot\mathbf{v}-\rho^{\prime}\mathbf{v}\cdot\nabla p\,\,,\label{eqn}\\
-\nabla p-\frac{\mu}{k}\mathbf{v}-\beta|\mathbf{v}|\mathbf{v}=0\,\,,\\
\rho^{\prime}=\gamma^{-1}\rho \label{state eqn}\,\,.
\end{align}

\begin{assumption}\label{gamma small}
Since for many slightly compressible liquids $\gamma^{-1}$ is of order $10^{-8}$ \cite{aronson1986porous, dake1983fundamentals}, the term $\rho^{\prime}\mathbf{v}\cdot\nabla p$ in Eq.\eqref{eqn} is negligible.
\end{assumption}
Finally, the system can be rewritten as
\begin{align}
\rho^{\prime}\frac{\partial p}{\partial t}=-\rho \nabla\cdot\mathbf{v}\,\,,\label{continuity eqn}\\
-\nabla p-\frac{\mu}{k}\mathbf{v}-\beta|\mathbf{v}|\mathbf{v}=0 \label{F eqn}\,\,.
\end{align}
We will refer to Eq.\eqref{continuity eqn} as the continuity equation, Eq.\eqref{F eqn} as the Forchheimer equation, and Eq.\eqref{state eqn} as the state equation for the slightly compressible fluids.\\\\
\textbf{Darcy-Forchheimer equation:}
The velocity vector field $\mathbf{v}$ as a dependent variable can be uniquely represented as the following function of the pressure gradient.
\begin{align} 
\mathbf{v} = \mathbf{v}_\beta =  -f_\beta \left
(\|\nabla p\|\right) \nabla p \, , \label{DF eqn} \\
f_\beta\left(\|\nabla p\|\right) = \frac{2}{\alpha+\sqrt {\alpha^2 +4 \beta \|\nabla p\|}} \,. \label{non lin term}
\end{align}
where
$\alpha=\frac{\mu}{k}.$\\
Eq.\eqref{DF eqn} is referred as the Darcy-Forchheimer equation.

Note that in accordance to Remark\ref{inhom} function $f_{\beta}$ is different in porous block and inside fracture.

\begin{lemma} \label{fbeta}
	Let $f_\beta$ be defined by Eq.\eqref{non lin term}. Then, the velocity defined in the Darcy Forchheimer equation \eqref{DF eqn} solves the Forchheimer equation \eqref{F eqn}.
\end{lemma}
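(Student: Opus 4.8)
The plan is to substitute the ansatz \eqref{DF eqn} directly into \eqref{F eqn} and reduce the resulting vector identity to a scalar algebraic one. First I would set $g := \|\nabla p\|$ and observe that, since $f_\beta(g) > 0$ for every $g \ge 0$, the candidate velocity $\mathbf{v}_\beta = -f_\beta(g)\nabla p$ has magnitude $|\mathbf{v}_\beta| = f_\beta(g)\,g$. Inserting this into the left-hand side of \eqref{F eqn} and factoring out $\nabla p$, the equation $-\nabla p - \alpha \mathbf{v}_\beta - \beta |\mathbf{v}_\beta|\mathbf{v}_\beta = 0$ collapses to
\begin{equation}
\bigl(\alpha f_\beta(g) + \beta g\, f_\beta(g)^2 - 1\bigr)\,\nabla p = 0 ,
\end{equation}
so it suffices to check that $t = f_\beta(g)$ is a root of the scalar quadratic $\beta g\, t^2 + \alpha t - 1 = 0$.

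Second, I would solve that quadratic: for $g>0$ its unique nonnegative root is $t = \frac{-\alpha + \sqrt{\alpha^2 + 4\beta g}}{2\beta g}$, and rationalizing the numerator (multiplying numerator and denominator by $\alpha + \sqrt{\alpha^2+4\beta g}$) turns this into exactly $\frac{2}{\alpha + \sqrt{\alpha^2 + 4\beta g}}$, which is the definition \eqref{non lin term}. Hence the bracketed factor above vanishes identically and $\mathbf{v}_\beta$ solves \eqref{F eqn}. For completeness I would also record that $\mathbf{v}_\beta$ is the \emph{unique} solution: the map $\mathbf{v} \mapsto \alpha \mathbf{v} + \beta|\mathbf{v}|\mathbf{v}$ is radially strictly increasing, hence injective, which backs the "uniquely represented" phrasing preceding the lemma.

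Third, I would dispose of the degenerate cases so that the formula is valid throughout the domain as described in Remark \ref{inhom}: when $\nabla p = 0$ both sides of \eqref{F eqn} are zero and $\mathbf{v}_\beta = 0$ works trivially; and when $\beta = 0$ (the porous block) the formula gives $f_0 \equiv 1/\alpha = k/\mu$, recovering Darcy's law $\mathbf{v} = -(k/\mu)\nabla p$, which plainly satisfies \eqref{F eqn}. There is no genuine obstacle here; the one mild subtlety worth spelling out is the passage from the indeterminate form $\frac{-\alpha+\sqrt{\alpha^2+4\beta g}}{2\beta g}$ (a $0/0$ expression as $\beta g \to 0$) to the manifestly regular form $\frac{2}{\alpha + \sqrt{\alpha^2+4\beta g}}$, which is precisely why $f_\beta$ is written as in \eqref{non lin term} and which makes the continuous extension of $f_\beta$ to $g = 0$ and to $\beta = 0$ transparent.
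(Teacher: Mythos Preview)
Your argument is correct and matches the paper's approach exactly: the paper (citing Lemma~2.1 of \cite{aulisa2009mathematical}, and in fact containing the same computation in a commented-out block) substitutes \eqref{DF eqn} into \eqref{F eqn}, factors out $\nabla p$, reduces to the scalar quadratic $\beta\zeta f_\beta^2+\alpha f_\beta-1=0$, and rationalizes the positive root to recover \eqref{non lin term}. Your additional remarks on uniqueness and on the degenerate cases $\nabla p=0$ and $\beta=0$ go slightly beyond what the paper records but are consistent with it (the $\beta\to0$ observation is precisely Corollary~2.1).
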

\begin{proof} The proof follows from Lemma 2.1 in Ref.\cite{aulisa2009mathematical}.
\end{proof}

\begin{assumption}\label{beta indep}
	The coefficient $\beta$ in the non linear term of Eq.\eqref{DF eqn} does not depend on pressure. Changes of density for slightly compressible fluids have a minor impact on changes in the coefficient $\beta$ \cite{dake1983fundamentals}.
\end{assumption}

\begin{corollary}
As $\beta \to 0$ in lemma \ref{fbeta}, the Darcy-Forchheimer equation reduces to the Darcy equation.
\end{corollary}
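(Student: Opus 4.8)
The plan is to pass to the limit directly in the closed-form expression \eqref{non lin term} for the nonlinearity $f_\beta$. First I would fix a point of the domain and regard $\|\nabla p\|$ as a fixed nonnegative number; since $\alpha=\mu/k>0$ is strictly positive (finite, nonzero permeability), the radicand $\alpha^2+4\beta\|\nabla p\|$ is continuous in $\beta$ and tends to $\alpha^2$ as $\beta\to 0^+$, whence $\sqrt{\alpha^2+4\beta\|\nabla p\|}\to\alpha$.

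Next I would substitute this limit into \eqref{non lin term} to obtain
\[
\lim_{\beta\to 0} f_\beta\!\left(\|\nabla p\|\right)=\frac{2}{\alpha+\alpha}=\frac{1}{\alpha}=\frac{k}{\mu},
\]
a constant independent of $\nabla p$. Plugging this into the Darcy-Forchheimer relation \eqref{DF eqn} yields $\mathbf{v}\to -\frac{k}{\mu}\nabla p$, which is exactly Darcy's law.

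As an independent check, I would set $\beta=0$ directly in the Forchheimer equation \eqref{F eqn}: the term $\beta|\mathbf{v}|\mathbf{v}$ drops out and one is left with $-\nabla p-\frac{\mu}{k}\mathbf{v}=0$, i.e.\ $\mathbf{v}=-\frac{k}{\mu}\nabla p$, consistent with the limit above and with Lemma \ref{fbeta}.

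There is essentially no obstacle here: the only point worth stating explicitly is that strict positivity of $\alpha$ is what makes $\sqrt{\alpha^2}=\alpha$ and keeps the denominator of $f_\beta$ bounded away from zero, so the limit is taken cleanly and uniformly in $\|\nabla p\|$. If one wanted convergence at the level of PDE solutions rather than just of the constitutive law, one would additionally need a uniform-in-$\beta$ bound on $\|\nabla p\|$, but for the stated reduction of the equation the pointwise algebraic limit suffices.
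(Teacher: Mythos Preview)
Your argument is correct and is exactly the natural justification; the paper itself states this corollary without proof, treating it as immediate from the explicit formula \eqref{non lin term}. There is nothing to add.
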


\begin{lemma}
	Let assumptions \ref{dissipation} - \ref{beta indep} hold and $f_\beta$ be defined by the formula \eqref{DF eqn}. Then the pressure function $p$ satisfies the quasi linear parabolic equation
	\begin{equation}
	\frac{\partial p}{\partial t}=\gamma \nabla \cdot \left(f_\beta(\|\nabla p\|)\nabla p\right) \label{parabolic eqn}.
	\end{equation}
\end{lemma}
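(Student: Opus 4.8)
The plan is to combine the three governing relations established above---the continuity equation \eqref{continuity eqn}, the state equation \eqref{state eqn}, and the Darcy--Forchheimer representation \eqref{DF eqn}---and eliminate both the density $\rho$ and the velocity field $\mathbf{v}$ in favour of the pressure $p$ alone. In other words, the lemma is essentially a bookkeeping consequence of Assumptions \ref{dissipation}--\ref{beta indep}, assembled in the right order.

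First I would rewrite the continuity equation \eqref{continuity eqn} using the state equation \eqref{state eqn}. Since $\rho = {\rho_0}\exp^{{\gamma^{-1}}(p-p_0)} > 0$ pointwise, the factor $\rho' = \gamma^{-1}\rho$ never vanishes, so dividing \eqref{continuity eqn} by $\rho'$ is legitimate and gives
\begin{equation}
\frac{\partial p}{\partial t} = -\frac{\rho}{\rho'}\,\nabla\cdot\mathbf{v} = -\gamma\,\nabla\cdot\mathbf{v}.
\end{equation}
This already incorporates Assumption \ref{gamma small}, which was what allowed the passage from \eqref{eqn} to \eqref{continuity eqn}, so no residual density-gradient terms survive.

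Next I would substitute the velocity. By Lemma \ref{fbeta}, the field $\mathbf{v}_\beta = -f_\beta(\|\nabla p\|)\nabla p$ prescribed by the Darcy--Forchheimer equation \eqref{DF eqn} solves the Forchheimer equation \eqref{F eqn}; and by Assumption \ref{beta indep} the coefficient $\beta$, hence $f_\beta$ viewed as a function of its scalar argument, does not depend on $p$, so $f_\beta(\|\nabla p\|)$ is an admissible scalar coefficient that may be carried through the divergence. Replacing $\mathbf{v}$ by $\mathbf{v}_\beta$ in the displayed identity yields
\begin{equation}
\frac{\partial p}{\partial t} = -\gamma\,\nabla\cdot\bigl(-f_\beta(\|\nabla p\|)\nabla p\bigr) = \gamma\,\nabla\cdot\bigl(f_\beta(\|\nabla p\|)\nabla p\bigr),
\end{equation}
which is precisely \eqref{parabolic eqn}.

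I do not expect a genuine obstacle here; the only points deserving a word of care are the strict positivity of $\rho$ (so that division by $\rho'$ is justified) and the combined appeal to Lemma \ref{fbeta} and Assumption \ref{beta indep} to write $\mathbf{v} = -f_\beta(\|\nabla p\|)\nabla p$ with a pressure-independent nonlinearity. The spatial inhomogeneity of $f_\beta$ noted after \eqref{non lin term} (Darcy in the porous block, genuine Forchheimer in the fracture, cf. Remark \ref{inhom}) does not interfere with the derivation, since \eqref{parabolic eqn} is to be read in the weak sense across the block--fracture interface.
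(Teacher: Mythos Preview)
Your argument is correct and follows exactly the route the paper indicates: substitute the state equation \eqref{state eqn} and the Darcy--Forchheimer representation \eqref{DF eqn} into the continuity equation \eqref{continuity eqn}. You have simply spelled out the two substitutions (division by $\rho'=\gamma^{-1}\rho$ and replacement of $\mathbf{v}$ by $-f_\beta(\|\nabla p\|)\nabla p$) that the paper's one-line proof leaves implicit.
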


\begin{proof}
Substituting Eqs.~\eqref{state eqn} and \eqref{DF eqn} in the continuity equation \eqref{continuity eqn}, Eq.~\eqref{parabolic eqn} will be obtained.
\end{proof}

\subsection{Diffusive Capacity and Pseudo-Steady State Regime (PSS)}
The system \eqref{continuity eqn} - \eqref{F eqn} characterizes the fluid flow of well exploitation in a reservoir. Various production regimes can be determined by different boundary conditions \cite{ibragimov2005mathematical}. In our modeling, no flux is coming into or out of the exterior boundary of the reservoir. The IBVP for the system~\eqref{continuity eqn} - \eqref{DF eqn} is formulated by the well boundary condition, impermeable exterior boundary condition, and the initial well pressure. The solution of the IBVP illustrates the hydrodynamical properties of the reservoir well system.

The {\it productivity index} (PI) represents a concept used in reservoir engineering
to characterize the efficiency of the well and the available reserves. It represents the relationship between production rate, average pressure of the reservoir, and the average pressure of the wellbore. To view this mathematically, we introduce the notion of an integral functional defined over the solutions of the IBVP for the system~\eqref{continuity eqn} - \eqref{DF eqn}, called the diffusive capacity \cite{aulisa2009mathematical}. In this section, we explore the diffusive capacity as a generalization of the productivity index. 

Let $\Omega$,  denote the reservoir domain bounded by the exterior no flux boundary$\Gamma_{out}$ and the well surface  $\Gamma_w$,with given total flax.  Flow in vertical direction  assumed to be equal zero.

\begin{figure}[h]
	\begin{center}
		\includegraphics[scale=0.2]{./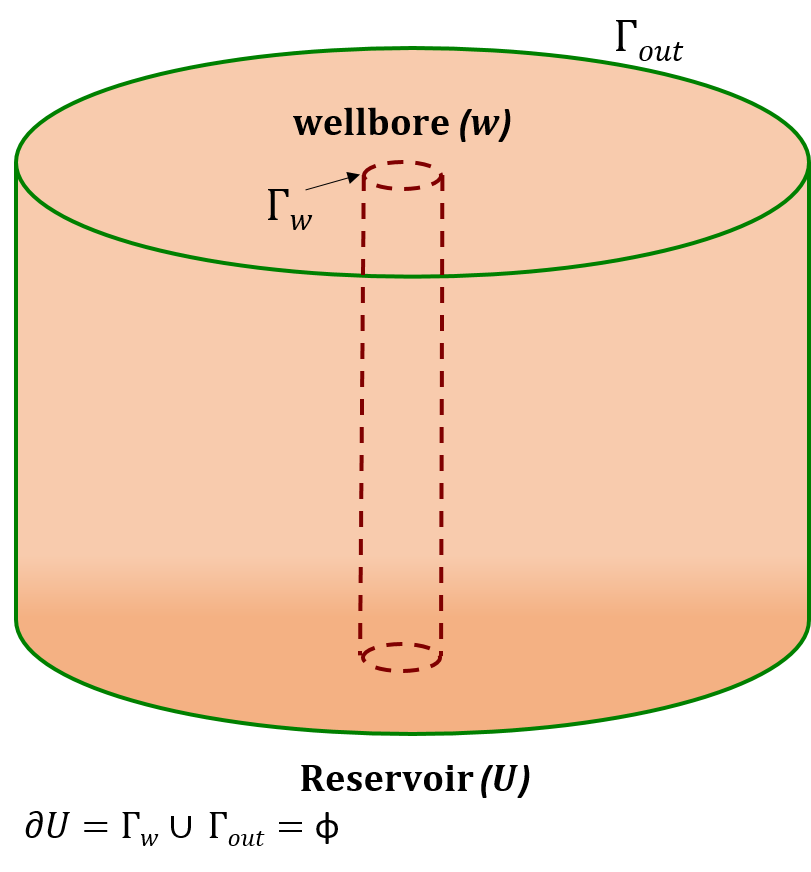}
		\caption{Reservoir Domain}	
	\end{center}
\end{figure}

Let $p_0(x)$ be the initial pressure of the reservoir.
Let $\bar p_{\Omega}(t)=\frac{1}{\|\Omega\|}\int_\Omega  p \ dx$ be the average pressure on $\Omega$ and 
$\bar p_w(t)=\frac{1}{\|\Gamma_w\|} \int_{\Gamma_w} p \ ds $ be the average pressure on $\Gamma_w$
where $\|\Omega\|$ is the volume of the reservoir and
$\|\Gamma_w\|$is the area of the well.

\begin{defn}Diffusive Capacity.\\
Let the pressure $p$ and the velocity $\mathbf{v}$ form the solution for the system~\eqref{continuity eqn} - \eqref{F eqn} in the bounded domain $\Omega$, with impermeable boundary condition $\mathbf{v}\cdot \mathbf{n} \, \big \rvert_{\Gamma_{out}} = 0$ where $\mathbf{n}$ is the outward unit normal on the piecewise smooth surface $\Gamma_{out}$. Assume for any $t>0$, the pressure drawdown (PDD) on the well is positive: $\left(\bar p_\Omega(t)-\bar p_w(t)>0 \right)$. Then the diffusive capacity is defined by
\begin{equation}
J_p(t)=\frac{\int_{\Gamma_w}\mathbf{v}\cdot \mathbf{n}\, ds }{\bar p_\Omega(t)- \bar p_w(t)}.\label{DC} 
\end{equation}
	
\end{defn}

It has been observed on filed Data that when the well production rate $Q(t)= Q= const$, the well productivity index stabilizes to a constant value over time (see fo example \cite{raghavan1993well}).

\begin{defn} PSS regime.\\
	Let the well production rate Q be time independent: $\int_{\Gamma_w} \mathbf{v}\cdot \mathbf{n}\, ds= Q.$ The flow regime is called a pseudo-steady state (PSS) regime, if the pressure drawdown is constant (i.e.,  $\bar p_\Omega(t)-\bar p_w(t)=$constant.)
	\end{defn}

\begin{corollary}
For any PSS regime, the diffusive capacity/PI is time-invariant.
\end{corollary}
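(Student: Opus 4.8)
The plan is to read the conclusion off the two properties that \emph{define} a PSS regime, so no new analytic input is required; the only thing to verify is that the explicit time dependence in $J_p(t)$ cancels.

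First I would recall the defining formula $J_p(t)=\dfrac{\int_{\Gamma_w}\mathbf{v}\cdot\mathbf{n}\,ds}{\bar p_\Omega(t)-\bar p_w(t)}$ from \eqref{DC}. By the first requirement in the definition of a PSS regime, the numerator equals the prescribed constant $Q$ for every $t>0$; in particular it does not depend on $t$. By the second requirement, the pressure drawdown $\bar p_\Omega(t)-\bar p_w(t)$ is a (positive, by the standing hypothesis attached to the diffusive capacity) constant, say $\delta p$, so the quotient is well defined and independent of $t$. Hence $J_p(t)=Q/\delta p$ for all $t$, which is the asserted time invariance, and it simultaneously displays $J_p$ as the ratio of production rate to pressure drawdown, i.e.\ the reservoir-engineering form of the PI.

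Since the argument is a single cancellation, there is no genuine obstacle inside the proof of the corollary itself; the substantive content lies in the \emph{existence} and the \emph{structure} of a PSS solution, which the corollary does not claim. If one wanted the statement to be self-contained in that stronger sense, the extra work would be: integrate the continuity equation \eqref{continuity eqn} over $\Omega$, use $\mathbf{v}\cdot\mathbf{n}\rvert_{\Gamma_{out}}=0$ and $\int_{\Gamma_w}\mathbf{v}\cdot\mathbf{n}\,ds=Q$ together with the slightly-compressible approximation $\rho\approx\mathrm{const}$ to obtain $\frac{d}{dt}\bar p_\Omega(t)=-\gamma Q/\|\Omega\|$, and then observe that the PSS hypothesis forces the same affine-in-$t$ behaviour on $\bar p_w$ and, via \eqref{parabolic eqn}, on $p(x,t)$ pointwise, yielding the representation $P_{PSS}(x,t)=-At+W(x)$ with $W$ solving a steady BVP. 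But for the corollary as stated, only the two defining properties of the PSS regime are used.
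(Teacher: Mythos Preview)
Your argument is correct and matches the paper's treatment: the corollary is stated without proof because it is immediate from the definition of a PSS regime and the defining formula \eqref{DC}, exactly as you observe. Your additional paragraph on the structure of PSS solutions anticipates Proposition~\ref{soln to aux prob}, but, as you note, it is not needed for the corollary itself.
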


\subsubsection{PSS solution for the initial boundary value problem.}
Let $\Omega$ be a bounded reservoir domain with impermeable exterior boundary and inner boundary$\Gamma_w$ which model well. Assume that the well is operating under time independent constant rate of production Q and the initial reservoir pressure is known. Let the assumptions \ref{dissipation} - \ref{gamma small} hold. Then the IBVP modeling the oil filtration process can be formulated as 
\begin{align}
\rho{\prime}\frac{\partial p}{\partial t}=-\rho \nabla\cdot\mathbf{v} \,,\\
-\nabla p-\frac{\mu}{k}\mathbf{v}-\beta|\mathbf{v}|\mathbf{v}=0\,,\\
\int_{\Gamma_w} \mathbf{v}\cdot \mathbf{n} ds= Q\,,\\
\mathbf{v}\cdot \mathbf{n} \, \big \rvert_{\Gamma_{out}} = 0\,,\\
p(x,t_0)= p_0(x)\,.
\end{align}
Using the assumption \ref{beta indep} , the IBVP reduces to \\
\begin{align}
\gamma^{-1}\frac{\partial p}{\partial t}= \nabla \cdot \left(f_\beta(\|\nabla p\|)\nabla p\right)\label{parab eqn}\,,\\
\int_{\Gamma_w} f_\beta \left(\|\nabla p\|\right)\frac{\partial p}{\partial \mathbf{n}}  ds=-Q \label{total flux}\,,\\
\frac{\partial p}{\partial \mathbf{n}}\Big \rvert_{\Gamma_{out}} = 0\,,\\
p(x,t_0)= p_0(x) \label{ini pressure}\,.
\end{align}

Since the boundary condition on $\Gamma_w$ is a single integral condition for the total flux, the IBVP \eqref{parab eqn}-\eqref{ini pressure} has infinitely many solutions and thus, it is ill posed. The diffusive capacity is defined as an integral characteristic of the solution  and therefore lacks the uniqueness. So, we constrain the solution to an auxiliary problem with uniqueness up to an additive constant \cite{aulisa2009mathematical}.

\subsubsection{Steady state auxiliary BVP}
Let $Q$ be the rate of production. Assume that the boundary $\partial \Omega$ is smooth. Let $W$ be the solution of the auxiliary steady state BVP\\
\begin{align}
-\frac{Q}{\|\Omega\|}=\nabla \cdot \left(f_\beta(\|\nabla W\|)\nabla W \right)\,\ \text{in} \ \Omega,\label{aux prob}\\
W \big \rvert_{\Gamma_w}=0\,,\\
\frac{\partial W}{\partial \mathbf{n}}\Big \rvert_{\Gamma_{out}} = 0 \label{aux ex boundary}\,.
\end{align}

Through integration by parts, we obtain
\begin{equation}
\int_{\Gamma_w} f_\beta(\|\nabla W\|)\frac{\partial W}{\partial \mathbf{n}}  ds=-Q\,.
\end{equation}

\begin{prop} \label{soln to aux prob}
	Let $W(x)$ be the solution of the auxiliary problem \eqref{aux prob} - \eqref{aux ex boundary}, then
	\begin{equation}
	p(x,t)=W(x)-\gamma A t + K \label{soln}\,,
	\end{equation}
	where $ A= \frac{Q}{\|\Omega\|}$, solves the IBVP \eqref{parab eqn} - \eqref{ini pressure}. For this solution, we have the pseudo-steady state, and therefore the Diffusive Capacity is constant.
\end{prop}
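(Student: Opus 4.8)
The plan is to verify by direct substitution that the function in \eqref{soln} satisfies every relation of the IBVP \eqref{parab eqn}--\eqref{ini pressure}, and then to read off the pseudo-steady state property — and hence the time‑invariance of the diffusive capacity — from the explicit form of $p$.

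First I would record that, since the term $-\gamma A t + K$ is independent of $x$, we have $\nabla p = \nabla W$, hence $\|\nabla p\| = \|\nabla W\|$ and $f_\beta(\|\nabla p\|) = f_\beta(\|\nabla W\|)$ pointwise (the piecewise definition of $f_\beta$ from Remark~\ref{inhom}, with $\beta = 0$ in the porous block and $\beta \neq 0$ in the fracture, causes no difficulty here), and also $\partial p/\partial t = -\gamma A$. Plugging into \eqref{parab eqn}: the left‑hand side is $\gamma^{-1}\partial p/\partial t = -A = -Q/\|\Omega\|$, and the right‑hand side is $\nabla\cdot\!\left(f_\beta(\|\nabla W\|)\nabla W\right)$, which equals $-Q/\|\Omega\|$ by \eqref{aux prob}; so the parabolic equation holds. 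The exterior condition follows from $\partial p/\partial\mathbf{n}|_{\Gamma_{out}} = \partial W/\partial\mathbf{n}|_{\Gamma_{out}} = 0$, i.e. from \eqref{aux ex boundary}. The total‑flux condition \eqref{total flux} becomes $\int_{\Gamma_w} f_\beta(\|\nabla W\|)\,\partial W/\partial\mathbf{n}\,ds = -Q$, which is exactly the integration‑by‑parts identity stated just before the proposition. For the initial condition \eqref{ini pressure} one fixes the free additive constant $K$; since only the total flux is imposed on $\Gamma_w$ the IBVP is ill posed, so it is consistent with the formulation to take the initial datum in the compatible family $p_0(x) = W(x) - \gamma A t_0 + K$, for which \eqref{soln} is the distinguished PSS representative.

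Next I would check the pseudo-steady state property. Integrating \eqref{soln} over $\Omega$ and over $\Gamma_w$ and using $W|_{\Gamma_w} = 0$ yields
\[
\bar p_\Omega(t) = \frac{1}{\|\Omega\|}\int_\Omega W\,dx - \gamma A t + K, \qquad \bar p_w(t) = -\gamma A t + K,
\]
so the pressure drawdown $\bar p_\Omega(t) - \bar p_w(t) = \frac{1}{\|\Omega\|}\int_\Omega W\,dx$ is independent of $t$. To make this an admissible drawdown in the definition of the diffusive capacity one also needs it positive for all $t>0$, which follows from a maximum‑principle argument for the degenerate‑elliptic operator in \eqref{aux prob}: with source $-Q/\|\Omega\| < 0$, zero Dirichlet data on $\Gamma_w$ and zero Neumann data on $\Gamma_{out}$, one obtains $W \ge 0$ and $W \not\equiv 0$. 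Since the drawdown is then a positive constant, \eqref{soln} is a PSS regime, and the corollary asserting that the diffusive capacity is time‑invariant on any PSS regime gives the last assertion; using $\int_{\Gamma_w}\mathbf{v}\cdot\mathbf{n}\,ds = Q$ in \eqref{DC} one moreover gets the explicit constant $J_p = \|\Omega\|\,Q\big/\!\int_\Omega W\,dx$.

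I do not anticipate a genuine obstacle: the core of the argument is a one‑line substitution. The only points demanding care are (i) the meaning of ``solves the IBVP'' in the presence of the ill‑posedness — handled by fixing $K$ and restricting $p_0$ to the compatible class above — and (ii) the positivity of $\int_\Omega W\,dx$ required for the drawdown to be admissible, which is the one place where structural properties of $f_\beta$ (its positivity together with the sign of the source term) are actually used rather than pure algebra.
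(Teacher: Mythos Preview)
Your proposal is correct and follows essentially the same approach as the paper: the paper's proof is a one-line reference to substituting \eqref{soln} into \eqref{parab eqn} and verifying the boundary and initial conditions, which is precisely what you carry out in detail. Your additional remarks on positivity of the drawdown and on fixing $K$ to handle the ill-posedness go somewhat beyond what the paper spells out, but they are consistent with the surrounding discussion and do not change the method.
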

\begin{proof}
(See Ref. \cite{aulisa2009mathematical}).
Proposition \ref{soln to aux prob} is followed by substituting \eqref{soln} to Eq.\eqref{parab eqn} and verifying boundary and initial conditions \eqref{total flux} - \eqref{ini pressure}.
\end{proof}

\subsection{Fractured Reservoir Modeling}
In this section, we model the oil filtration process with a constant rate of production $Q$, for a fractured reservoir system.

Consider a fractured reservoir domain where the exterior boundary  $\Gamma_{out}$ is impermeable.
Let  $\Omega=\Omega_p\cup \Omega_f$, where $\Omega_p$ be the porous media domain, $\Omega_f$ be the fracture domain, $\Gamma_f$ be the boundary between the fracture, and the porous media and $\Gamma_{f_{out}}$ be the extreme of the fracture.

\begin{figure}[h]
	\begin{center}
		\includegraphics[scale=0.4]{./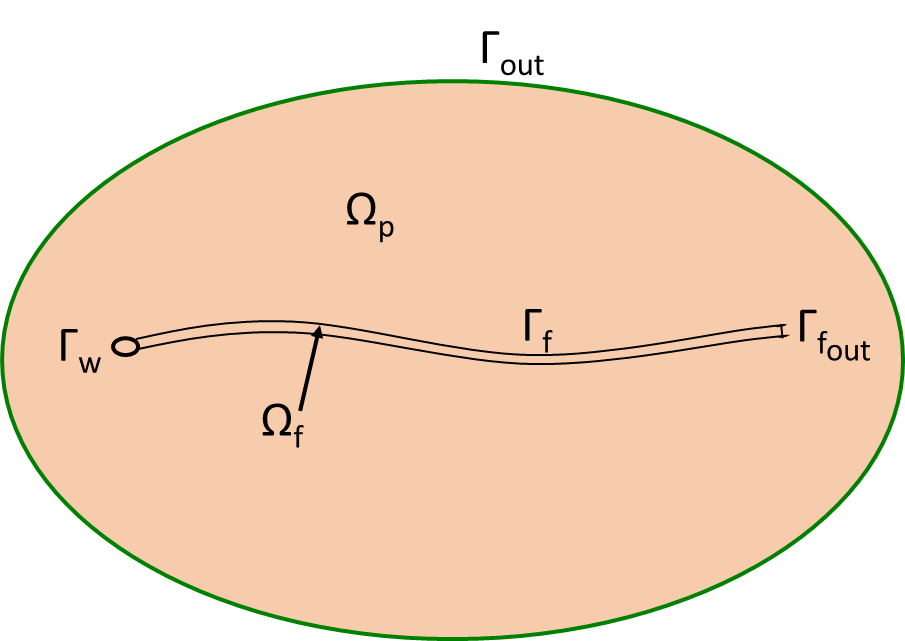}
		\caption{Fractured-Reservoir Domain}	
	\end{center}
\end{figure}
Let $\mathbf{v}_p, W_p, k_p$ be the velocity, the pressure and the permeability of the porous media, respectively. Let $\mathbf{v}_f, W_f$ be the velocity and the pressure of the fracture, respectively.
Let $\mathbf{n}_p$, and $\mathbf{n}_f$ be the unit outward normal vectors 
to the porous medium and the fracture, respectively.

The auxiliary BVP \eqref{aux prob} - \eqref{aux ex boundary} for the above mentioned reservoir-fracture 
system can be modeled as 

\begin{align}
-\frac{Q}{\|\Omega\|} &= \nabla \cdot k_p \nabla W_p & \mbox{ in } \Omega_p \label{porous pressure}\,,\\
-\frac{Q}{\|\Omega\|} &=\nabla \cdot f_\beta(\|\nabla W_f\|) \nabla W_f & \mbox{ in } \Omega_f \label{frac pressure}\,,\\
\mathbf{v}_p \cdot \mathbf{n}_p &= 0  & \mbox{ on } \Gamma_{out} \, ,\\
W_p &= W_f  & \mbox{ on } \Gamma_f \cup \Gamma_{f_{out}} \label{mass_cont}\,,\\
\mathbf{v}_p \cdot \mathbf{n}_p &= - \mathbf{v}_f \cdot \mathbf{n}_f  & \mbox{ on } \Gamma_f \cup \Gamma_{f_{out}}\, ,\label{flux_cont}\\
W &= 0  & \mbox{ on } \Gamma_w \label{ini well} \,.
\end{align}

Here  Eqs.~\eqref{mass_cont} and \eqref{flux_cont} assure the continuity of the solutions and continuity of the fluxes
across the interface $\Gamma_f$. Notice that we consider the linear Darcy law in the reservoir, 
while the non-linear Forchheimer equation is considered in the fracture.
%

\section{Fracture Model Reduction}
Now, we consider the fracture to be the rectangular region $\Omega_f=[0,L]\times[-\frac{h}{2},\frac{h}{2}]$.
In the following, 
we assume the fracture thickness $h$ to be several orders of magnitude smaller
than the dimension of the reservoir. 
Under this assumption, we assume that the fracture $\Omega_f$ is an interior boundary of the flow in domain $\Omega_p$, where a certain PDE
has to be satisfied. 
Similarly, we model the well of the reservoir to be just a point located at the left extreme of the fracture line segment.

Then, we reduce the fracture region to the line segment on the $x$-axis, $\Gamma_f = [0,L]$,
and the well as a point located at the origin. 

For convenience in the reduced model we use the same notation
$\Gamma_f$ for the domain of fracture, that was used in the original model as the boundary of $\Omega_f$.

\begin{defn}
We define
$$\bar{W}_f(x)=\frac{1}{h} \int_{-\frac{h}{2}}^{\frac{h}{2}}W dy,$$ 
to be the average pressure across the fracture thickness.
\end{defn}

\begin{assumption}\label{const pressure}
Assume the thickness of the fracture, $h$, to be very small,
and consider the pressure to be constant across thickness. 
Assume that the flux coming into the fracture from 
the boundary $\Gamma_{f_{out}}$ is negligible. Namely, we assume
$$ \int_{\Gamma_{f_{out}}}  \mathbf{v}_f \cdot \mathbf{n}_f ds = 0,$$
or, equivalently,
$$ \partial_x W = \partial_x \bar W_f = 0 \mbox{ on }\Gamma_{f_{out}}.$$
\end{assumption}

\begin{theorem}
Under Assumption \ref{const pressure}, Eq.~\eqref{frac pressure} and Eq.~\eqref{flux_cont} can be reduced to
\begin{equation}
-\frac{Q}{\|\Omega\|}h=h\partial_x \left(f_\beta(\|\partial_x \bar W _f\|) \partial_x \bar W_f \right)+ \jump{k_p \frac{\partial W_p}{\partial y}(x,0)} \, \, \, \mbox{ on } \Gamma_f\,, \label{new frac pressure}
\end{equation}
where $\jump{f(x,0)} = f(x,0^+)-f(x,0^-)$. 
\end{theorem}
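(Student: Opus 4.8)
The plan is to obtain \eqref{new frac pressure} by integrating Eq.~\eqref{frac pressure} across the thickness of the fracture at a fixed $x\in(0,L)$ and then using the flux continuity \eqref{flux_cont} on the horizontal faces $y=\pm\frac h2$ to rewrite the transverse flux as a jump of the porous pressure across the centerline. Write $\mathbf F = f_\beta(\|\nabla W_f\|)\nabla W_f = -\mathbf v_f$ and split $\nabla\cdot\mathbf F = \partial_x F_x + \partial_y F_y$. Integrating \eqref{frac pressure} in $y$ over $[-\frac h2,\frac h2]$ turns the left-hand side into $-\frac{Q}{\|\Omega\|}h$, turns the transverse term into the trace difference $F_y(x,\tfrac h2)-F_y(x,-\tfrac h2)$ by the one-dimensional fundamental theorem of calculus, and leaves $\int_{-h/2}^{h/2}\partial_x F_x\,dy$ to be handled separately.

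For the transverse term I would unwind \eqref{flux_cont} using the orientations $\mathbf n_f=(0,+1)$ on the top face and $\mathbf n_f=(0,-1)$ on the bottom face, $\mathbf n_p=-\mathbf n_f$ on the adjacent porous side, and $\mathbf v_p=-k_p\nabla W_p$; this yields $F_y(x,\tfrac h2)=k_p\,\partial_y W_p(x,\tfrac h2^{+})$ and $F_y(x,-\tfrac h2)=k_p\,\partial_y W_p(x,-\tfrac h2^{-})$. Since in the reduced model the fracture rectangle is identified with the segment $y=0$, these two one-sided values are exactly the traces $k_p\,\partial_y W_p(x,0^{\pm})$, so $F_y(x,\tfrac h2)-F_y(x,-\tfrac h2)=\jump{k_p\frac{\partial W_p}{\partial y}(x,0)}$.

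The longitudinal term is where Assumption~\ref{const pressure} enters: taking the pressure constant across the thickness gives $W_f=\bar W_f(x)$, hence $\nabla W_f=(\partial_x\bar W_f,0)$ and $\|\nabla W_f\|=\|\partial_x\bar W_f\|$, so $F_x=f_\beta(\|\partial_x\bar W_f\|)\,\partial_x\bar W_f$ no longer depends on $y$ and $\int_{-h/2}^{h/2}\partial_x F_x\,dy = h\,\partial_x\!\big(f_\beta(\|\partial_x\bar W_f\|)\,\partial_x\bar W_f\big)$. Without that hypothesis the nonlinearity of $f_\beta$ would obstruct interchanging the transverse average with the nonlinear operator, so this is the essential modeling input. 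Collecting the three pieces produces \eqref{new frac pressure} on $\Gamma_f$; the remaining part of Assumption~\ref{const pressure}, $\partial_x\bar W_f=0$ on $\Gamma_{f_{out}}$, plays no role in this derivation and only supplies the Neumann condition at the fracture tip for the reduced one-dimensional problem.

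The only delicate point is the bookkeeping of the normal orientations, so that \eqref{flux_cont} contributes the correct sign to each of the two terms of the jump, and being explicit that identifying $\partial_y W_p$ at $y=\pm\frac h2$ with its one-sided traces at $y=0$ is part of the $h\to0$ reduction in which $\Gamma_f$ is treated as an interior interface, rather than an approximation introduced in this argument. Everything else is the divergence theorem in one variable together with the definition of $\bar W_f$.
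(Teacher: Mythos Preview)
Your proposal is correct and follows essentially the same route as the paper: integrate \eqref{frac pressure} across the thickness, split the divergence into longitudinal and transverse parts, use \eqref{flux_cont} with the appropriate normals to convert the transverse trace difference into $\jump{k_p\,\partial_y W_p(x,0)}$, and invoke Assumption~\ref{const pressure} to collapse the longitudinal integral to $h\,\partial_x\big(f_\beta(\|\partial_x\bar W_f\|)\partial_x\bar W_f\big)$. Your treatment of the normal orientations and of the identification of $\partial_y W_p$ at $y=\pm h/2$ with its one-sided traces at $y=0$ is, if anything, more explicit than the paper's.
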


\begin{proof}
	Using Eqs.~\eqref{frac pressure} and~\eqref{DF eqn} and then integrating over the thickness of the fracture, we have
\begin{align}
\int_{-\frac{h}{2}}^{\frac{h}{2}}-\frac{Q}{\|\Omega\|} dy &= \int_{-\frac{h}{2}}^{\frac{h}{2}} - \nabla \cdot \mathbf{v}dy\,,
\end{align}
or
\begin{align}
-\frac{Q}{\|\Omega\|} h &=- \int_{-\frac{h}{2}}^{\frac{h}{2}} \partial_x v_1 dy - \int_{-\frac{h}{2}}^{\frac{h}{2}} \partial_y v_2 dy\,,\label{eq39}
\end{align}
where $\mathbf{v}=\langle v_1,v_2 \rangle$.
The second integral on the right hand side of Eq.~\eqref{eq39} can be rewritten as 
\begin{align}
\int_{-\frac{h}{2}}^{\frac{h}{2}} \partial_y v_2 dy &=v_2\Big(x,\frac{h}{2}\Big)-v_2\Big(x,-\frac{h}{2}\Big) \nonumber \\
&=  \mathbf{v}_f\Big(x,\frac{h}{2}\Big) \cdot \mathbf{n}_f + \mathbf{v}_f\Big(x,-\frac{h}{2}\Big) \cdot \mathbf{n}_f  \nonumber \\
&= - \mathbf{v}_p\Big(x,\frac{h}{2}\Big) \cdot \mathbf{n}_p - \mathbf{v}_p\Big(x,-\frac{h}{2}\Big) \cdot \mathbf{n}_p \nonumber \\
&= -\Bigg[-k_p \left(\frac{\partial W_p(x,\frac{h}{2})}{\partial x},\frac{\partial W_p(x,\frac{h}{2})}{\partial y} \right) \cdot\langle0,-1\rangle\\ 
&\qquad\quad- k_p \left(\frac{\partial W_p(x,-\frac{h}{2})}{\partial x},\frac{\partial W_p(x,-\frac{h}{2})}{\partial y} \right) \cdot\langle 0,1\rangle \Bigg] \nonumber\\ 
&=-\Bigg[k_p \left( \frac{\partial W_p(x,\frac{h}{2})}{\partial y}-\frac{\partial W_p(x,-\frac{h}{2})}{\partial y}\right) \Bigg]\,. \nonumber
\end{align}
Since $h$ is assumed to be very small 
\begin{align}
\int_{-\frac{h}{2}}^{\frac{h}{2}} \partial_y v_2 dy &=-\Bigg[k_p \left( \frac{\partial W_p(x,0^+)}{\partial y}-\frac{\partial W_p(x,0^-)}{\partial y}\right) \Bigg] \nonumber\\
&=-\jump{k_p \frac{\partial W_p}{\partial y}(x,0)}\,.
\end{align}
which follows from Eq.~\eqref{flux_cont}, the continuity of the flux on the boundary. Here $\langle 0,1\rangle$ is the upward unit normal vector and $\langle0,-1\rangle$ is the downward unit normal vector to the fracture.\\
The first integral on the right hand side of Eq.~\eqref{eq39} can be rewritten as
\begin{align}
\int_{-\frac{h}{2}}^{\frac{h}{2}}  \partial_x v_1 dy &= \int_{-\frac{h}{2}}^{\frac{h}{2}}  \left(\partial_x (- f_\beta\left(\|\partial_x \bar W _f\|\right) \partial_x \bar W_f)\right) dy \nonumber \\
&=-\partial_x\left(f_\beta(\|\partial_x \bar W _f\|) \partial_x \int_{-\frac{h}{2}}^{\frac{h}{2}}  \bar W_f dy\right) \nonumber \\
&=-\partial_x\left(f_\beta(\|\partial_x \bar W _f\|) \partial_x(h \bar W_f)\right)\,.
\end{align}
Therefore, we obtain Eq.\eqref{new frac pressure}.
\end{proof}
Finally, the system~\eqref{porous pressure} - \eqref{ini well} can be reduced,
and the model for the filtration process can be formulated 
as 
\begin{align}
-\frac{Q}{\|\Omega\|} &=  \nabla \cdot k_p \nabla W_p & \hspace{-1cm}\mbox{ in } \Omega_p = \Omega \setminus \Gamma_f \,,\label{fin porous pressure}\\
\mathbf{v}_p \cdot \mathbf{n}_p &=  0  & \mbox{ on } \Gamma_{out} \, ,\\
W_p &= \bar W_f  & \mbox{ on } \Gamma_f \,,\\
-\frac{Q}{\|\Omega\|}h &= h\partial_x\left(f_\beta(\|\partial_x \bar W _f\|) \partial_x\bar  W_f\right) +\jump{k_p \frac{\partial W_p}{\partial y}(x,0)} & \mbox{ on } \Gamma_f\,, \label{jump} \\
W_p &= \bar W_f = 0  & \mbox{ on } \Gamma_w \,,\label{well pressure}\\
\partial_x \bar W &= 0 & \mbox{ on } \Gamma_{f_{out}} \,.\label{no flux}
\end{align}

\subsection{Estimates for the difference between solutions of original and reduced fracture models.}
In this section, we justify the introduction of the reduced model \eqref{fin porous pressure}-\eqref{no flux}. We analyze the difference between the solution for the original flow equation and the solution for the reduced fracture model in appropriate norms.
We consider two types of flows in the fracture: isotropic (Section \ref{isotropic}) and anisotropic (Section \ref{anisotropic}).

We denote $\Gamma_f^\pm$ to be the top/bottom boundary (at $\lambda= \pm \frac{h}{2}$) of the fracture $\Omega_f$ and $\mathbf{n}_f^{\pm}$ to be the unit outward normal vector on $\Gamma_f^\pm$.
We investigate the stability of the reduced problem defined in the fracture domain $\Omega_f$ with given flux conditions on the boundaries $\Gamma_f^\pm, \Gamma_{f_{out}}$ and given pressure at $\Gamma_w.$ 
Let $q^\pm(x)$ be given fluxes on $\Gamma_f^\pm$. Then $\jump{k_p \frac{\partial W_p}{\partial y}(x,0)}$ in Eq.~\eqref{jump} equals to $-q^+(x)-q^-(x).$

Next we prove closeness of the solutions of the reduced problem to the original one. Let $W=W(x,y)$ be the actual pressure inside  fracture $\Omega_f$ and $\bar W=\bar{W}(x)$ be the pressure solution of the reduced equation. Let $\nabla=\left<\partial_x,\partial_y \right>$. 

\subsubsection{Isotropic Case}  \label{isotropic}
In this section, we provide estimates for the difference between two solutions for the isotropic case.
The analysis will be based on the following results.

\begin{lemma} \label{lidia}
For $f_{\beta}(\|\nabla W \|)$ defined by Eq.~\eqref{non lin term}, $1 \leq q < 2$,
\begin{align}
&\int_\Omega \Big( f_\beta \left(\|\nabla W _1\| \right) \nabla W_1-  f_\beta \left(\|\nabla W _2\| \right) \nabla  W_2 \Big) 
\cdot \nabla \left(W_1- W_2\right)   d\Omega
 \nonumber \\
 &\qquad   \geq C \left\|\nabla \left(W_1- W_2\right) \right\|_{L^q} ^2
 \Big \{1+ \max 
 \left( \|\nabla W_1\|_{L^{\frac{q}{2(2-q)}}}, \|\nabla W_2\|_{L^{\frac{q}{2(2-q)}}}\right)\Big \}^{-1/2}\,.
\end{align}
\end{lemma}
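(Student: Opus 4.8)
The plan is to establish this as a quantitative strong-monotonicity estimate for the nonlinear operator $W \mapsto f_\beta(\|\nabla W\|)\nabla W$, where the degeneracy as $\|\nabla W\| \to \infty$ forces the appearance of the $L^q$ norm with $q<2$ rather than the natural $H^1$ norm. First I would reduce the integral inequality to a pointwise one: it suffices to show that for all vectors $\xi, \eta \in \mathbb{R}^2$,
\begin{equation}
\bigl(f_\beta(|\xi|)\xi - f_\beta(|\eta|)\eta\bigr)\cdot(\xi-\eta) \geq c\, |\xi-\eta|^2 \bigl(1 + \max(|\xi|,|\eta|)\bigr)^{-1/2},
\end{equation}
after which the $L^q$ structure is recovered by integrating and applying H\"older's inequality with exponents chosen so that $\tfrac{2}{q} = 1 + \tfrac{1}{\,q/(2(2-q))\,}$, i.e. the conjugate bookkeeping that matches the stated exponent $\tfrac{q}{2(2-q)}$ on the majorant.

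The key analytic input is a monotonicity bound for the scalar profile. Writing $g(s) = f_\beta(s)\, s$ (so the vector field is $g(|\xi|)\xi/|\xi|$ extended by $g(|\xi|)\xi = f_\beta(|\xi|)|\xi|\cdot(\xi/|\xi|)$; more precisely the field is $\xi \mapsto f_\beta(|\xi|)\xi$), I would show $g$ is increasing with $g'(s) \gtrsim (1+s)^{-1/2}$, and that the radial-plus-tangential splitting of $(f_\beta(|\xi|)\xi - f_\beta(|\eta|)\eta)\cdot(\xi - \eta)$ gives one term controlled by $g'$ along the segment joining $|\xi|$ and $|\eta|$ and another nonnegative term coming from $f_\beta \geq 0$ acting on the angular part. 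From $f_\beta(s) = 2/(\alpha + \sqrt{\alpha^2 + 4\beta s})$ one computes directly that $f_\beta(s) \sim s^{-1/2}$ and $g(s) = f_\beta(s) s \sim s^{1/2}$ for large $s$, and $g'(s)$ behaves like $(1+s)^{-1/2}$ up to constants depending on $\alpha,\beta$; this is where Lemma~\ref{fbeta}-type algebra and the explicit formula \eqref{non lin term} are used. A clean way to package the vector estimate is to invoke the standard inequality for gradients of convex functions: $\xi \mapsto f_\beta(|\xi|)\xi$ is the gradient of a radial convex potential $\Phi$ with $\Phi''$ bounded below by $(1+|\xi|)^{-1/2}$ in the appropriate (degenerate) sense, so that $(\nabla\Phi(\xi) - \nabla\Phi(\eta))\cdot(\xi-\eta) \geq \int_0^1 \Phi''(\eta + t(\xi-\eta))[\xi-\eta,\xi-\eta]\,dt$.

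Once the pointwise inequality $\bigl(f_\beta(|\xi|)\xi - f_\beta(|\eta|)\eta\bigr)\cdot(\xi-\eta) \geq c\,|\xi-\eta|^2(1+\max(|\xi|,|\eta|))^{-1/2}$ is in hand, I would integrate over $\Omega$, set $r(x) = (1+\max(|\nabla W_1|,|\nabla W_2|))^{-1/2}$, and write $\|\nabla(W_1-W_2)\|_{L^q}^2 = \bigl(\int |\nabla(W_1-W_2)|^q\bigr)^{2/q}$. Apply H\"older to $\int |\nabla(W_1-W_2)|^q = \int \bigl(|\nabla(W_1-W_2)|^2 r\bigr)^{q/2}\, r^{-q/2}$ with exponents $2/q$ and $2/(2-q)$; the first factor yields $\bigl(\int |\nabla(W_1-W_2)|^2 r\,d\Omega\bigr)^{q/2}$, bounded by the left-hand side via the pointwise inequality, and the second factor yields a power of $\int r^{-q/(2-q)}$, which after the outer $2/q$ power becomes $\|1+\max(|\nabla W_1|,|\nabla W_2|)\|_{L^{q/(2(2-q))}}^{1/2}$ up to absorbing the $1+$ into a $\max$ with a constant — matching the claimed form. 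The main obstacle I anticipate is not the H\"older bookkeeping (routine once the exponents are pinned down) but getting the sharp constant $c$ and the correct lower bound $(1+s)^{-1/2}$ on the modulus of monotonicity uniformly in $\alpha,\beta$ over the relevant ranges, including handling the near-origin regime $\|\nabla W\|$ small where $f_\beta \to 1/\alpha$ and the field degenerates to linear (Darcy) — there the estimate should be uniform and the "$1+$" term is exactly what absorbs that transition; care is also needed at points where one of $\xi,\eta$ vanishes so the $\xi/|\xi|$ splitting must be done via the potential $\Phi$ rather than literal polar coordinates.
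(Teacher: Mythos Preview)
Your sketch is correct in its essentials: the pointwise monotonicity bound
\[
\bigl(f_\beta(|\xi|)\xi - f_\beta(|\eta|)\eta\bigr)\cdot(\xi-\eta)\ \geq\ c\,|\xi-\eta|^2\bigl(1+\max(|\xi|,|\eta|)\bigr)^{-1/2}
\]
does hold (your potential-function argument works, and in fact the explicit formula $g(s)=f_\beta(s)s=(\sqrt{\alpha^2+4\beta s}-\alpha)/(2\beta)$ gives $g'(s)=(\alpha^2+4\beta s)^{-1/2}$ directly, which makes the near-origin regime painless), and your H\"older step with exponents $2/q$ and $2/(2-q)$ produces exactly the weight $\|1+\max(|\nabla W_1|,|\nabla W_2|)\|_{L^{q/(2(2-q))}}^{1/2}$ as claimed.

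You should be aware, though, that the paper does not prove this lemma at all: its entire proof reads ``The proof follows from Lemma III.11 in Ref.~\cite{aulisa2009analysis} with $a=1/2$.'' So there is no route to compare against---you have supplied the argument that the paper delegates to an external reference. Your two-step structure (pointwise lower bound, then H\"older to trade the weight for an $L^q$ norm) is the standard one and is almost certainly what the cited lemma does.
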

\begin{proof}
The proof follows from Lemma III.11 in Ref \cite{aulisa2009analysis} with $a=1/2$.
\end{proof}

Now, we investigate   the difference between the solutions of the two problems defined below:\\

1. The flow equation for the original fracture is given by

\begin{equation}
-\frac{Q}{\|\Omega\|} =\nabla \cdot f_\beta(\|\nabla W\|)\nabla W  \,, \label{new1L}
\end{equation}
with the boundary conditions
\begin{align}
-f_\beta (\|\nabla W\|)\nabla W \cdot \mathbf{n}_f^+&= q^+(x), &&\mbox{ on } \Gamma_f^+, \label{bc1L} \\
-f_\beta (\|\nabla W\|)\nabla W \cdot \mathbf{n}_f^-&= q^-(x), &&\mbox{ on } \Gamma_f^-, \label{bc2L} \\
W  &=0, &&\mbox{ on } \Gamma_w,\label{bc3L}\\
-f_\beta (\|\nabla W\|) \frac{\partial W }{\partial \mathbf n_f}  &=0, &&\mbox{ on } \Gamma_{f_{out}}\label{bc4L}\,.
\end{align}

2. The flow equation for the reduced 1-D fracture model, formulated as a 2-D problem, is given by 

\begin{equation}
-\nabla \cdot f_\beta(\|\nabla \bar W\|)\nabla \bar W  
=\frac{Q}{\|\Omega\|} - \frac{1}{h} \left(q^+(x)+q^-(x) \right)  \mbox{ in } \Omega_f \,, \label{new2L}
\end{equation}
with boundary conditions
\begin{align}
-f_\beta (\|\nabla \bar W\|)\nabla \bar W \cdot \mathbf{n}_f^\pm&= 0,& &\mbox{ on } \Gamma_f^+ \cup \Gamma_f^- \,,\label{bc1aL}\\
\bar W  &=0 ,& &\mbox{ on } \Gamma_w,\label{bc2aL}\\
-f_\beta (\|\nabla \bar W\|) \frac{\partial \bar W }{\partial \mathbf n_f}  &=0,& &\mbox{ on } \Gamma_{f_{out}}\label{bc3aL} \,.
\end{align}
Clearly the solution of the problem \eqref{new2L} - \eqref{bc3aL}  is $y$ independent.

\begin{lemma} \label{boundness}
There exists a constant $C$ depending on $\Omega$, $Q$, $q^+$ and $q^-$ such that the corresponding basic profiles $W$ and $\bar W$ satisfy
$$\|\nabla W\|_{L^\frac{3}{2}(\Omega)} \leq C, \qquad \|\nabla \bar W\|_{L^\frac{3}{2}(\Omega)} \leq C.$$
\end{lemma}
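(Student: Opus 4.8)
The plan is to establish both bounds by the same a~priori energy estimate: test the equation satisfied by each basic profile against that profile, extract the $L^{3/2}$-norm of its gradient from the nonlinear principal term, and absorb the remaining linear source and boundary-flux contributions by Poincar\'e, trace and H\"older inequalities, closing with Young's inequality. This is also exactly the control needed to later invoke the monotonicity estimate of Lemma~\ref{lidia} for $q<2$.

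I would first treat the original profile $W$ of \eqref{new1L}--\eqref{bc4L}. Multiplying \eqref{new1L} by $W$, integrating over $\Omega_f$ and integrating by parts, the boundary contributions on $\Gamma_w$ (where $W=0$) and on $\Gamma_{f_{out}}$ (where the flux vanishes) drop out, and \eqref{bc1L}--\eqref{bc2L} turn the top/bottom terms into $\int_{\Gamma_f^\pm}q^\pm W$, giving the identity
\begin{equation*}
\int_{\Omega_f} f_\beta(\|\nabla W\|)\,\|\nabla W\|^2\,dx = \frac{Q}{\|\Omega\|}\int_{\Omega_f} W\,dx - \int_{\Gamma_f^+} q^+ W\,ds - \int_{\Gamma_f^-} q^- W\,ds.
\end{equation*}
Next I would bound the left side from below: from the explicit formula \eqref{non lin term}, $f_\beta(\zeta)=\dfrac{2}{\alpha+\sqrt{\alpha^2+4\beta\zeta}}\geq \dfrac{c}{\sqrt{1+\zeta}}$ with $c=c(\alpha,\beta)>0$, hence $f_\beta(\zeta)\,\zeta^2\geq c_1(\zeta^{3/2}-1)$ for all $\zeta\geq0$, so the left side is $\geq c_1\|\nabla W\|_{L^{3/2}(\Omega_f)}^{3/2}-c_1|\Omega_f|$. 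For the right side, since $W=0$ on $\Gamma_w$ the Poincar\'e inequality gives $\|W\|_{L^{3/2}(\Omega_f)}\leq C\|\nabla W\|_{L^{3/2}(\Omega_f)}$, and together with H\"older and the trace embedding $W^{1,3/2}(\Omega_f)\hookrightarrow L^{3}(\partial\Omega_f)$ each term is controlled by $C(Q,q^+,q^-)\,\|\nabla W\|_{L^{3/2}(\Omega_f)}$. Writing $X=\|\nabla W\|_{L^{3/2}(\Omega_f)}$, the identity reduces to $c_1X^{3/2}\leq C_0+C_1X$, and since $3/2>1$ Young's inequality with the conjugate pair $(3/2,3)$ absorbs the linear term and yields $X\leq C$.

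For the reduced profile $\bar W$ of \eqref{new2L}--\eqref{bc3aL} I would repeat the argument with $\bar W$ as test function; now \emph{all} boundary terms vanish (homogeneous Neumann on $\Gamma_f^\pm\cup\Gamma_{f_{out}}$ and $\bar W=0$ on $\Gamma_w$), leaving
\begin{equation*}
\int_{\Omega_f} f_\beta(\|\nabla\bar W\|)\,\|\nabla\bar W\|^2\,dx=\int_{\Omega_f}\Big(\frac{Q}{\|\Omega\|}-\tfrac1h\big(q^+(x)+q^-(x)\big)\Big)\bar W\,dx.
\end{equation*}
Since $\bar W$ and $q^\pm$ are $y$-independent, $\tfrac1h\int_{\Omega_f}(q^++q^-)\bar W\,dx=\int_0^L(q^++q^-)\bar W\,dx$, so the apparent $1/h$ singularity cancels; the $x$-Poincar\'e inequality then bounds the full right side linearly in $\|\nabla\bar W\|_{L^{3/2}(\Omega_f)}$, and the identical coercivity-plus-Young step closes the estimate.

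The main obstacle is the nonlinearity, concentrated in the coercivity bound $f_\beta(\zeta)\,\zeta^2\gtrsim\zeta^{3/2}-1$: this is the only place where the Forchheimer term (as opposed to linear Darcy, which would give an $L^2$ bound) enters, and it is what forces the exponent $3/2$ in the statement and the use of Young's inequality with exponents $(3/2,3)$ rather than $(2,2)$. The remaining points are routine but should be noted: (i) one must know that $W,\bar W$ belong to the energy space $W^{1,3/2}(\Omega_f)$ with zero trace on $\Gamma_w$, so that they are admissible test functions --- this is part of the weak formulation/existence theory and, if needed, is justified on Galerkin approximations with passage to the limit; and (ii) the embedding constants depend on the fixed fracture geometry (hence on $h$), so the bound is not asserted to be uniform as $h\to0$ --- consistent with the later observation that only the \emph{difference} of the two profiles is $h$-independent in the anisotropic regime.
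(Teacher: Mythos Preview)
Your argument is correct. The paper does not actually prove this lemma independently: it simply invokes Theorem~V.4 of \cite{aulisa2009analysis} (with the parameter $a=1/2$), which is a general $L^{3/2}$ a~priori bound for basic profiles of the generalized Forchheimer equation. What you have written is, in effect, a self-contained reconstruction of that result specialized to the present boundary-value problems \eqref{new1L}--\eqref{bc4L} and \eqref{new2L}--\eqref{bc3aL}. The key steps you identify --- coercivity $f_\beta(\zeta)\zeta^2\gtrsim\zeta^{3/2}-1$ forcing the exponent $3/2$, Poincar\'e--Friedrichs on $\Gamma_w$, the trace embedding $W^{1,3/2}(\Omega_f)\hookrightarrow L^3(\partial\Omega_f)$ to handle $q^\pm$, and closing with Young $(3/2,3)$ --- are exactly the ingredients behind the cited theorem. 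Your observation that the $1/h$ in \eqref{new2L} cancels against the $y$-integration for the $y$-independent $\bar W$ is the right way to see that the reduced problem fits the same template, and your caveat that the constants depend on the fixed geometry (hence on $h$) is consistent with the paper's own remark following Theorem~3.2 that the bound is not uniform as $h\to0$.
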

\begin{proof}
The proof follows from Theorem V.4 in Ref \cite{aulisa2009analysis} with $a=1/2$.
\end{proof}

\begin{theorem}
\begin{align}
\|W_x- \bar W_x \|^2_{L^\frac{3}{2}(\Omega_f)}
+ \|W_y \|^2_{L^\frac{3}{2}(\Omega_f)}
\leq C  \left( \|q^+ \|^2_{L^3(\Omega_f)}+ \|q^-\|^2_{L^3(\Omega_f)} \right)\,.
\end{align}

 \end{theorem}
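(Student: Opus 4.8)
The plan is to derive an error identity by testing the weak formulations of the two boundary value problems against their difference $\varphi:=W-\bar W$ (which vanishes on $\Gamma_w$), and then to combine a lower bound coming from the monotonicity Lemma~\ref{lidia} with an upper bound that involves only the transverse derivative $W_y$. Multiplying \eqref{new1L} by $\varphi$, integrating over $\Omega_f$, integrating by parts and using \eqref{bc1L}--\eqref{bc4L} together with $\varphi|_{\Gamma_w}=0$ gives
\[
\int_{\Omega_f} f_\beta(\|\nabla W\|)\nabla W\cdot\nabla\varphi \, d\Omega
= \frac{Q}{\|\Omega\|}\int_{\Omega_f}\varphi \, d\Omega - \int_{\Gamma_f^+} q^+\varphi \, ds - \int_{\Gamma_f^-} q^-\varphi \, ds .
\]
Doing the same with \eqref{new2L}--\eqref{bc3aL}, where every boundary contribution vanishes because $\bar W$ has zero flux on $\Gamma_f^\pm\cup\Gamma_{f_{out}}$ and $\varphi|_{\Gamma_w}=0$, gives
\[
\int_{\Omega_f} f_\beta(\|\nabla \bar W\|)\nabla \bar W\cdot\nabla\varphi \, d\Omega
= \frac{Q}{\|\Omega\|}\int_{\Omega_f}\varphi \, d\Omega - \frac1h\int_{\Omega_f}(q^+(x)+q^-(x))\varphi \, d\Omega .
\]
Subtracting, the production terms cancel and I obtain the error identity
\[
\int_{\Omega_f}\Big(f_\beta(\|\nabla W\|)\nabla W - f_\beta(\|\nabla \bar W\|)\nabla \bar W\Big)\cdot\nabla\varphi \, d\Omega
= \frac1h\int_{\Omega_f}(q^++q^-)\varphi \, d\Omega - \int_{\Gamma_f^+} q^+\varphi \, ds - \int_{\Gamma_f^-} q^-\varphi \, ds .
\]

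For the right-hand side I would use that $\bar W$, hence $q^\pm$, depend on $x$ only, so that with $\bar\varphi(x):=\frac1h\int_{-h/2}^{h/2}\varphi(x,y)\,dy$ the bulk defect term equals $\int_0^L(q^++q^-)\bar\varphi\,dx$ and the whole right-hand side collapses to
\[
\int_0^L q^+(x)\big(\bar\varphi(x)-\varphi(x,\tfrac h2)\big)\,dx + \int_0^L q^-(x)\big(\bar\varphi(x)-\varphi(x,-\tfrac h2)\big)\,dx .
\]
Since $\bar W$ is $y$-independent, $\partial_y\varphi=\partial_y W=W_y$, so the fundamental theorem of calculus yields $|\bar\varphi(x)-\varphi(x,\pm h/2)|\le\int_{-h/2}^{h/2}|W_y(x,y)|\,dy$. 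Applying H\"older in $x$ (exponents $3$ and $3/2$) and then H\"older in $y$ to reconstitute two-dimensional norms bounds the right-hand side by $C\big(\|q^+\|_{L^3(\Omega_f)}+\|q^-\|_{L^3(\Omega_f)}\big)\,\|W_y\|_{L^{3/2}(\Omega_f)}$, the powers of $h$ generated by passing between one- and two-dimensional norms cancelling exactly (this is why the final estimate is $h$-free once the $q^\pm$-norms are taken over $\Omega_f$).

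For the left-hand side I would invoke Lemma~\ref{lidia} with $q=3/2$, noting that then $\tfrac{q}{2(2-q)}=\tfrac32$, which is precisely the exponent controlled by Lemma~\ref{boundness}; hence the left-hand side is bounded below by $C\|\nabla\varphi\|_{L^{3/2}(\Omega_f)}^2$. Since $|W_y|\le|\nabla\varphi|$ and $|W_x-\bar W_x|\le|\nabla\varphi|$ pointwise and $\bar W_y=0$, this dominates $\tfrac C2\big(\|W_x-\bar W_x\|_{L^{3/2}(\Omega_f)}^2+\|W_y\|_{L^{3/2}(\Omega_f)}^2\big)$. Writing $S$ for the square root of this last quantity and using $\|W_y\|_{L^{3/2}(\Omega_f)}\le S$, the two bounds give $S^2\le C\big(\|q^+\|_{L^3(\Omega_f)}+\|q^-\|_{L^3(\Omega_f)}\big)S$; dividing by $S$, squaring, and using $(a+b)^2\le 2(a^2+b^2)$ yields the claimed estimate.

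I expect the main obstacle to be the treatment of the right-hand side: the boundary flux integrals and the $\tfrac1h$-weighted bulk defect term are \emph{not} individually uniformly bounded, and one must recognize that they combine --- precisely because the reduced pressure carries no transverse variation --- into the differences $\bar\varphi-\varphi(\cdot,\pm h/2)$, which are $O(\|W_y\|)$, while simultaneously tracking the powers of $h$ so that nothing diverges as $h\to0$. The two integrations by parts, the H\"older estimates, and the final absorption of the single power of $\|W_y\|_{L^{3/2}(\Omega_f)}$ into the quadratic left-hand side are routine once Lemmas~\ref{lidia} and~\ref{boundness} are in hand.
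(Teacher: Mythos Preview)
Your proposal is correct and follows essentially the same route as the paper: subtract the two weak formulations tested against $\varphi=W-\bar W$, recognize that the bulk defect $\tfrac1h\!\int(q^++q^-)\varphi$ and the boundary fluxes combine into the transverse differences $\bar\varphi-\varphi(\cdot,\pm h/2)$ controlled by $\|W_y\|_{L^{3/2}}$, and bound the left-hand side from below via Lemma~\ref{lidia} with $q=3/2$ together with Lemma~\ref{boundness}. The only cosmetic difference is in closing the estimate: the paper applies Young's inequality with a parameter $\varepsilon$ and absorbs the resulting $\|W_y\|_{L^{3/2}}^2$ into the left-hand side, whereas you write the inequality as $S^2\le C(\|q^+\|+\|q^-\|)S$ and divide through---these are equivalent.
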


 \begin{proof}
Subtracting Eq.~\eqref{new2L} from Eq.~\eqref{new1L}, multiplying by $W- \bar W$ and integrating over the volume of the fracture we obtain,
\begin{align}
&\int_0^L \int_{-\frac{h}{2}}^{\frac{h}{2}} 
-\nabla \cdot \Big( f_\beta(\|\nabla W\|)\nabla W - f_\beta(\|\nabla \bar W\|)\nabla \bar W \Big)\left(W- \bar W\right)\, dy dx \nonumber \\
&\qquad \qquad \qquad = \int_0^L \int_{-\frac{h}{2}}^{\frac{h}{2}}  \frac{1}{h} \left(q^+(x)+q^-(x) \right)(W- \bar W )\, dy dx\,.
\end{align}
Using Green's formula (See Ref \cite{evans1997partial}) and boundary conditions \ref{bc3L} and \ref{bc4L}, the above equation is equivalent to
\begin{align}
I_1=I_2+I_3\,, \label{I123L}
\end{align}
where
\begin{align}
I_1=&\int_0^L \int_{-\frac{h}{2}}^{\frac{h}{2}}\Big( f_\beta(\|\nabla W\|)\nabla W - f_\beta(\|\nabla \bar W\|)\nabla \bar W \Big)\cdot \nabla \left(W- \bar W\right)\, dy dx \,, \label{I1L} \\
I_2= & \int_0^L \int_{-\frac{h}{2}}^{\frac{h}{2}}  \frac{1}{h} \left(q^+(x)+q^-(x) \right)(W- \bar W )\, dy dx\,, \label{I2L}\\
I_3= & \int_{0}^{L}\Big( f_\beta(\|\nabla W\|)\nabla W - f_\beta(\|\nabla \bar W\|)\nabla \bar W \Big)\cdot \mathbf{n}^+_f(W- \bar W )\Big \rvert_{\frac{h}{2}}\,\, dy \nonumber \\
+ & \int_{0}^{L}\Big( f_\beta(\|\nabla W\|)\nabla W - f_\beta(\|\nabla \bar W\|)\nabla \bar W \Big)\cdot \mathbf{n}^-_f(W- \bar W )\Big \rvert_{-\frac{h}{2}}dy \,.\label{I3L}
\end{align}
Consider $I_1$.
By lemma \ref{boundness} and \ref{lidia} with $q=3/2$, there exist positive constants $C_0$ and $C_1$ such that,
 \begin{align}
I_1 &\geq C_0 
 \left\|\nabla \left(W- \bar W\right) \right\|_{L^\frac{3}{2}}^{2} \ge 
 C_1  \left( 
 \left\|W_x- \bar W_x\right\|_{L^\frac{3}{2}}^{2} + 
 \left\|W_y- \bar W_y\right\|_{L^\frac{3}{2}}^{2} \right)\nonumber\\
 &= 
 C_1  \left( 
 \left\|W_x- \bar W_x\right\|_{L^\frac{3}{2}}^{2} + 
 \left\|W_y\right\|_{L^\frac{3}{2}}^{2} \right)\,.
 \label{I_1new}
\end{align}
Now consider $I_3$.
Due to the boundary conditions \eqref{bc1L}, \eqref{bc2L} and since $\bar W$ is $y$ independent, we have
\begin{align}
I_3= \int_0^L  -q^+(x)(W- \bar{W} ) \big \rvert_{\frac{h}{2}} 
-q^-(x)(W- \bar{W}) \big \rvert_{-\frac{h}{2}} \,\, dx.
\end{align}
It then follows that
\begin{align}
I_2+I_3&=\int_0^L q^+(x) \int_{-\frac{h}{2}}^{\frac{h}{2}}\frac{ W(x,y) - W\left(x,\frac{h}{2}\right)}{h}\,\,dy\,dx \nonumber \\ 
&+\int_0^L q^-(x) \int_{-\frac{h}{2}}^{\frac{h}{2}}\frac{ W(x,y) - W\left(x,-\frac{h}{2}\right)}{h}\,\,dy\,dx \,. \label{I2+3L}
\end{align}
 Using Holder, Cauchy and Poincare inequalities is not difficult to show that
\begin{align}
 | I_2+I_3| &\le \frac{1}{4 \varepsilon}
 \Bigg[ \left( \int_0^L \int_{-\frac{h}{2}}^{\frac{h}{2}} \left|q^+\right|^3   dy \, dx \right)^\frac{2}{3}+
 \left( \int_0^L \int_{-\frac{h}{2}}^{\frac{h}{2}}  \left|q^-\right|^3  dy \, dx \right)^\frac{2}{3} \Bigg]\nonumber \\
&\qquad +2\varepsilon \left(\frac{2}{3}\right)^\frac{4}{3} \left(\int_0^L \int_{-\frac{h}{2}}^{\frac{h}{2}} \left| W_y \right|^\frac{3}{2} \,dy\, dx\right)^\frac{4}{3}\,. \label{modI2+3L}
\end{align}
Combining Eqs.~\eqref{I_1new}, \eqref{I2+3L} and \eqref{modI2+3L}, choosing
$\varepsilon=\frac{C_1}{4}\left(\frac{3}{2}\right)^\frac{4}{3}$ and setting $C_2=\frac{1}{C_1} \left(\frac{2}{3}\right)^\frac{4}{3} $
yields
\begin{align}
& C_1 \left\|W_x- \bar W_x\right\|_{L^\frac{3}{2}}^{2} + 
 \frac{C_1}{2} \left\|W_y\right\|_{L^\frac{3}{2}}^{2} \le
C_2 \left( \|q^+ \|^2_{L^3(\Omega_f)} +\|q^-\|^2_{L^3(\Omega_f)} \right).
%
%
%
\end{align}
Therefore, we have
\begin{align}
\|W_x- \bar W_x \|^2_{L^\frac{3}{2}(\Omega_f)}
+ \|W_y \|^2_{L^\frac{3}{2}(\Omega_f)}
\leq C \left(\|q^+ \|^2_{L^3(\Omega_f)} +\|q^-\|^2_{L^3(\Omega_f)}\right)\,,
\end{align}
with $C=2 C_2/C_1$.
\end{proof}

\begin{remark}
From the theorem above, it follows that for a given fracture with thickness $h$, 
the difference between the solutions of the two problems can be controlled by the boundary data.
For $h$ going to zero, the constant $C$ in \ref{boundness} will grow since each individual solution will diverge
with order $h^{-s}$, for some $s>0$. Then, the above theorem will not be sufficient.

However, it should be noted that in the reservoir-fracture system as $h$ goes to zero, the fracture
vanishes, and the oil flows toward the well similarly to the unfractured case. 
Then as $h$ becomes smaller, $q^+$ and $q^-$ gets smaller as well, and therefore the individual velocities remain bounded. In this setting the above theorem is still valid.
\end{remark} 
In the next section, we will prove a better result as $h$ goes to zero,  
which will work also for unbounded individual solutions, 
by making a stronger assumption on the type of the non-linearity the flow is subjected to.

 \subsubsection{Anisotropic Case}\label{anisotropic}
In this section, we provide estimates for the difference between two solutions for small fracture thickness 
assuming the flow to be anisotropic. 

In particular we assume that only the velocity component parallel to the fracture is subjected to the Forchheimer equation, 
while the component perpendicular to the fracture is subjected to Darcy law. Namely,
we assume that in \eqref{F eqn}, $\mathbf{v} = (v_1,v_2)$ is subjected to the non-linear anisotropic equation of the form
\begin{align*} 
&- p_x-\frac{\mu}{k}v_1-\beta|v_1|v_1=0\,,\\
&- p_y-\frac{\mu}{k}v_2=0\,.
\end{align*}
   
In this case the non-linear function in \eqref{non lin term} becomes the tensor 
$$F_\beta(\bm{\eta})= \begin{bmatrix}
    f_\beta(|\eta_1|)      & 0 \\
    0   &   k
   \end{bmatrix}\,, $$ where $\bm \eta =(\eta_1,\eta_2)$, and $f_{\beta}(|\eta_1|)=\frac{2}{\alpha+\sqrt {\alpha^2 +4 \beta |\eta_1|}}.$
   
In the analysis, we will use the following properties.
\begin{lemma}\label{NSM} 
For $f_{\beta}(|\eta |)$ defined as above, the function $f_{\beta}(|\eta |)|\eta |$ is strictly monotonic on bounded sets. More precisely,
\begin{align}
\Big(f_{\beta}(|\eta_1 |) \eta_1 -& f_{\beta}( |\eta_2 |) \eta_2 \Big)(\eta_1 - \eta_2 )  \nonumber \\
& \geq \frac{1}{2}f_{\beta}\left(\max (|{\eta}_1| ,|{\eta}_2| )\right)({\eta}_1 - {\eta}_2 ) ^2\,.
\end{align}
\end{lemma}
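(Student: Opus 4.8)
The plan is to reduce this statement to a one–variable monotonicity inequality for the scalar function $g:\mathbb{R}\to\mathbb{R}$ defined by $g(s)=f_\beta(|s|)\,s$. With this notation the left-hand side of the claimed inequality is exactly $\bigl(g(\eta_1)-g(\eta_2)\bigr)(\eta_1-\eta_2)$, and this, together with the right-hand side $\tfrac12 f_\beta(\max(|\eta_1|,|\eta_2|))(\eta_1-\eta_2)^2$, is symmetric under the swap $\eta_1\leftrightarrow\eta_2$. Hence it suffices to prove the inequality assuming $\eta_1>\eta_2$, the case $\eta_1=\eta_2$ being trivial (both sides vanish).

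Next I would record two elementary facts about $g$. First, using \eqref{non lin term}, for $s>0$ one has $g(s)=\dfrac{2s}{\alpha+\sqrt{\alpha^2+4\beta s}}$, and a direct computation gives
\[
g'(s)=\frac{1}{\sqrt{\alpha^2+4\beta|s|}}\qquad(s\neq0);
\]
for $s<0$ this follows from the same computation since $f_\beta(|\cdot|)$ is even, so $g$ is odd. Because $g(0)=0$ and $g'(s)$ is even and continuous at $0$ (with value $1/\alpha$), $g$ is of class $C^1$ on $\mathbb{R}$; in particular it is absolutely continuous. Second, I would establish the pointwise bound
\[
g'(s)=\frac{1}{\sqrt{\alpha^2+4\beta|s|}}\ \ge\ \frac{1}{\sqrt{\alpha^2+4\beta t}}\ \ge\ \frac12\,f_\beta(t)\qquad\text{whenever }t\ge|s|,
\]
where the first inequality is the monotonicity of $t\mapsto(\alpha^2+4\beta t)^{-1/2}$ and the second follows from $\sqrt{\alpha^2+4\beta t}\le\alpha+\sqrt{\alpha^2+4\beta t}$ combined with $f_\beta(t)=\dfrac{2}{\alpha+\sqrt{\alpha^2+4\beta t}}$.

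To conclude, set $M=\max(|\eta_1|,|\eta_2|)$. For every $s$ in the interval $[\eta_2,\eta_1]$ we have $|s|\le M$, so the pointwise bound gives $g'(s)\ge\tfrac12 f_\beta(M)$ on that whole interval — and this single argument simultaneously covers the same-sign case, the opposite-sign case, and the degenerate situation where $0$ lies in $[\eta_2,\eta_1]$. By the fundamental theorem of calculus,
\[
g(\eta_1)-g(\eta_2)=\int_{\eta_2}^{\eta_1}g'(s)\,ds\ \ge\ \tfrac12 f_\beta(M)\,(\eta_1-\eta_2),
\]
and multiplying through by $\eta_1-\eta_2>0$ yields precisely the asserted estimate. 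Strict monotonicity follows at once since $f_\beta(M)>0$; and on a bounded set $M$ stays bounded while $f_\beta$ is decreasing, so $f_\beta(M)$ is bounded below by a positive constant, which is the uniform strong monotonicity stated in the lemma.

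I do not expect a genuine obstacle: once the problem is reduced to $g$, the lemma is a short computation. The only points needing mild care are the bookkeeping of absolute values and signs — handled cleanly by passing to the odd function $g$ and integrating over $[\eta_2,\eta_1]$ — and checking that the constant that actually comes out is $\tfrac12$. This constant is sharp: since $g'(t)/f_\beta(t)=\tfrac12\bigl(1+\alpha/\sqrt{\alpha^2+4\beta t}\bigr)\to\tfrac12$ as $t\to\infty$, no constant larger than $\tfrac12$ can hold uniformly.
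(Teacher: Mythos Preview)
Your argument is correct. The computation $g'(s)=1/\sqrt{\alpha^2+4\beta|s|}$ is right (the numerator in the quotient-rule calculation collapses to $(\alpha+\sqrt{\alpha^2+4\beta s})^2$ via $4\beta s=(\sqrt{\alpha^2+4\beta s}-\alpha)(\sqrt{\alpha^2+4\beta s}+\alpha)$), the pointwise bound $g'(s)\ge\tfrac12 f_\beta(M)$ for $|s|\le M$ is exactly the pair of inequalities you wrote, and the convexity-of-the-interval observation $|s|\le M$ for $s\in[\eta_2,\eta_1]$ handles all sign configurations at once. The sharpness remark is a nice addition.

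By contrast, the paper does not give a self-contained proof here at all: it simply invokes Proposition~III.6 of \cite{aulisa2009analysis} with $\lambda=1$ and scalar $\bm\eta$. That proposition is a general vector-valued monotonicity estimate for the Darcy--Forchheimer nonlinearity, of which the present lemma is the one-dimensional specialization. Your route is therefore genuinely different in presentation: more elementary and fully self-contained, trading the generality of the cited result for a short explicit calculus argument tailored to the scalar case. Both lead to the same constant $\tfrac12$.
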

\begin{proof}
The proof follows from Proposition III.6 in Ref. \cite{aulisa2009analysis}), where $\lambda=1$ and $\bm \eta$ is one dimensional. 
\end{proof}

\begin{lemma} \label{g(u)}
Let $$ g( u ) = \left( \sqrt{1+|u|}-1\right)\sign(u).$$ Then
$$ g'( u ) = \frac{1}{2 \sqrt{1+|u|}}\le 1/2,$$ and
$$ (g(u)-g(v))^2 \ge (\sqrt{0.5 u}~\sign(u) - \sqrt{0.5 v}~\sign(v))^2 \quad \mbox{ if } \max(|u|,|v|)\ge 24.$$
\end{lemma}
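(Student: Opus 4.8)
The two assertions have a different flavour, so I would prove them separately. The first is a direct piecewise computation: on $u>0$ we have $g(u)=\sqrt{1+u}-1$ and on $u<0$ we have $g(u)=1-\sqrt{1-u}$, so in both cases $g'(u)=\tfrac12(1+|u|)^{-1/2}$; since this expression is continuous at $u=0$ and $g(0)=0$, the function $g$ is differentiable at the origin as well with $g'(0)=\tfrac12$, and $(1+|u|)^{-1/2}\le1$ gives the stated bound $g'\le1/2$ everywhere.

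For the inequality, write $\psi(u)=\sqrt{|u|/2}\,\sign(u)$ for the comparison function on the right (this is how I read $\sqrt{0.5u}\,\sign(u)$ for general real $u$). Since $g$ and $\psi$ are both odd, the claim $(g(u)-g(v))^2\ge(\psi(u)-\psi(v))^2$ reduces, after possibly replacing $(u,v)$ by $(-u,-v)$, to $|g(u)-g(v)|\ge|\psi(u)-\psi(v)|$ in two cases. The workhorse is the one-variable function $r(t)=\sqrt{1+t}-\sqrt{t/2}$ on $[0,\infty)$: a short derivative computation shows $r'(t)\ge0\iff t\ge1$, so $r$ decreases on $[0,1]$ from $r(0)=1$ to its global minimum $r(1)=1/\sqrt2$ and then increases to $+\infty$; in particular $r\in[1/\sqrt2,1]$ on $[0,1]$, and $r(24)=5-2\sqrt3>3/2$.

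In the case $uv\ge0$ I would take $u\ge v\ge0$ (so the larger variable satisfies $u\ge24$); both $g(u)-g(v)$ and $\psi(u)-\psi(v)$ are nonnegative and the inequality rearranges to $r(u)\ge r(v)$. If $v\ge1$ this follows from monotonicity of $r$ on $[1,\infty)\ni u,v$; if $v<1$ then $r(v)\le r(0)=1<r(24)\le r(u)$. In the case $uv<0$ I would take $u>0>v$; here $g(u)-g(v)=\sqrt{1+u}+\sqrt{1+|v|}-2$ and $\psi(u)-\psi(v)=\sqrt{u/2}+\sqrt{|v|/2}$, so the inequality rearranges to $r(u)+r(|v|)\ge2$, and since one of $u,|v|$ is at least $24$ while $r\ge1/\sqrt2$ always, the left-hand side is at least $(5-2\sqrt3)+1/\sqrt2>2$. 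In both cases the two sides being nonnegative lets me square, which yields the assertion.

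I expect the only genuine obstacle to be the behaviour of $r$ on $[0,1]$: there $\psi$ is actually steeper than $g$, so one cannot argue pointwise that $g$ grows faster than $\psi$. The resolution is that the total drop of $r$ over $[0,1]$ is only $1-1/\sqrt2$, a small quantity dominated as soon as the larger of $|u|,|v|$ is big enough for $r$ to have recovered it; the threshold $t=24$ is merely a convenient explicit value making the bookkeeping close in both cases, and any sufficiently large constant would serve. Everything else is routine estimation.
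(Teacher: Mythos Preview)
Your argument is correct. The derivative computation is routine, and your treatment of the second inequality via the auxiliary function $r(t)=\sqrt{1+t}-\sqrt{t/2}$ is clean: the monotonicity analysis ($r$ decreasing on $[0,1]$, increasing on $[1,\infty)$, with $r(1)=1/\sqrt2$ and $r(24)=5-2\sqrt3$) is exactly what is needed, and both case reductions ($r(u)\ge r(v)$ when $uv\ge0$, and $r(u)+r(|v|)\ge2$ when $uv<0$) check out numerically.

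The paper takes a different route. It fixes $u\ge24$ and studies the single-variable function $l(v)=(g(u)-g(v))^2-(\psi(u)-\psi(v))^2$, asserting (without full detail) that $l$ has three critical points $v_1<0<v_2<v_3=u$, with the absolute minimum $l(u)=0$, hence $l\ge0$. Your approach avoids this critical-point analysis altogether by working directly with $r$, which has the advantage of making every estimate explicit and showing transparently why the threshold $24$ suffices (indeed, your bookkeeping makes clear that any $t_0$ with $r(t_0)>2-1/\sqrt2$ and $r(t_0)>1$ would do). The paper's approach, by contrast, packages the comparison into a single function of $v$ but leaves the verification of the critical-point structure to the reader. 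Both are valid; yours is the more self-contained of the two.
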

\begin{proof}
Let $u\ge 24$ fixed. It is not difficult to show that the piecewise defined function
\begin{align*}
l(v) = \left\{
\begin{array}{l l}
 l^-(v) = (g(u)-g(v))^2 - (\sqrt{0.5u} + \sqrt{ - 0.5v})^2 & \mbox { for } v<0 \\
 l^+(v) = (g(u)-g(v))^2 - (\sqrt{0.5u} - \sqrt{   0.5v})^2  & \mbox { for } v\ge0
\end{array}
\right.
\end{align*}
is continuous, differentiable and satisfies $l(v)\ge 0$, for all $v$. 
In particular, for any given $u\ge 24$, the function $l(v)$ has three critical points $v_1$, $v_2$ and $v_3$,
such that $v_1<0<v_2<v_3=u$, 
with $v_3$ being an absolute minimum and $l(v_3)=0$.

For $u\le -24$ the proof is similar. Moreover by symmetry of $u$ and $v$, the same argument can be repeated by fixing $|v|\ge24$ and letting $u$
vary.
\end{proof} 

Now, we investigate   the difference between the solutions of the two problems defined below:

1. The flow equation for the original fracture is given by

\begin{align}
-\frac{\partial}{\partial x} \left( f_\beta \left(\left|W_x \right|\right)W_x  \right)-\frac{\partial}{\partial y}\left( k W_y \right) =\frac{Q}{\|\Omega\|}     \mbox{ in } \Omega_p  \label{new1}  \,,
\end{align}
with boundary conditions
\begin{align}
-k \nabla W \cdot \mathbf{n}_f^+& =q^+(x),& &\mbox{ on } \Gamma_f^+ \label{bc1+} \,,\\
-k \nabla W \cdot \mathbf{n}_f^-& =q^-(x),& &\mbox{ on } \Gamma_f^-  \,, \label{bc1-} \\
W&=0,&  &\mbox{ on } \Gamma_w  \,,\label{bc2} \\
-f_\beta (|W_x|) \frac{\partial W }{\partial \mathbf n_f}  &=0,
&&\mbox{ on } \Gamma_{f_{out}}  \,.\label{bc3}
\end{align}

2. The flow equation for the reduced fracture model is given by
\begin{equation}
-\frac{\partial}{\partial x} \left( f_\beta \left(\left|\bar W_x \right|\right)\bar W_x \right) =\frac{Q}{\|\Omega\|} - \frac{1}{h} \left(q^+(x)+q^-(x) \right)  \mbox{ in } \Omega_f \,, \label{new2}
\end{equation}
with boundary conditions
\begin{align}
-k \nabla \bar W \cdot \mathbf{n}_f^\pm& =0,& &\mbox{ on } \Gamma_f^+ \cup \Gamma_f^- \,,\label{bc1a}\\
 \bar W &= 0 ,& &\mbox{ on } \Gamma_w   \,, \label{bc2a} \\
-f_\beta (|\bar W_x|) \frac{\partial \bar W }{\partial \mathbf n_f}  &= 0,&  &\mbox{ on } \Gamma_{f_{out}} \,. \label{bc3a} 
\end{align}

\begin{theorem}
\begin{align}
\int_0^L \int_{-\frac{h}{2}}^{\frac{h}{2}}\Big( f_\beta \left(\left|W_x \right|\right)W_x  
-  & f_\beta\left( \left|\bar W_x \right| \right)\bar W_x \Big)
(W_x- \bar W_x ) 
+ \frac{k}{2} W_y^2 \,\, dy\, dx  \nonumber  \\
&\le \frac{h}{2 k} \int_0^L  \left(q^-\right)^2 + \left(q^+\right)^2    dx.
\end{align}
Moreover,
\begin{align}
\int_0^L \int_{-\frac{h}{2}}^{\frac{h}{2}}&  \frac{k}{2} \frac{\alpha^2}{\beta} \left( \sqrt{\left| W_x \right|}\sign 
\left( W_x\right) - \sqrt{\left|\bar W_x \right|}\sign \left(\bar W_x\right)  \right)^2 
\Hs\left(W_x, \bar W_x \right) \nonumber \\
& +\frac{k}{6}\left(
 W_x- \bar W_x\right)^2 \left( 1 -\Hs\left(W_x, \bar W_x \right) \right) 
+ \frac{k}{2}\,  W_y^2 \,\, dy\, dx \nonumber \\
 & \qquad  \leq
 \frac{h}{2 k} \int_0^L  \left(q^+\right)^2 + \left(q^-\right)^2 dx \,,
\end{align}
where
\begin{equation}\label{Hsdef}
     \Hs(\zeta,\eta)=
    \begin{cases}
      1, & \text{if } \max(\left|\zeta\right|, \left| \eta\right|) \ge \frac{6 \alpha^2}{\beta}  \\
      0, & \text{otherwise}
    \end{cases}.
   \end{equation}
\end{theorem}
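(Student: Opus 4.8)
The plan is to run the same energy argument as in the isotropic case, now exploiting the extra structure coming from the fact that the perpendicular component stays linear. First I would subtract the reduced equation \eqref{new2} from the original one \eqref{new1} (note $\partial_y(k\bar W_y)=0$ because $\bar W$ is $y$-independent), multiply the difference by $W-\bar W$ and integrate over $\Omega_f=[0,L]\times[-\tfrac h2,\tfrac h2]$. Green's formula turns the left side into the ``energy'' integrand plus boundary terms: on $\Gamma_w$ they vanish since $W=\bar W=0$ there, and on $\Gamma_{f_{out}}$ they vanish by the Neumann conditions \eqref{bc3}, \eqref{bc3a}; the only surviving boundary contributions, on $\Gamma_f^\pm$, are rewritten with \eqref{bc1+}, \eqref{bc1-} as $k\,\partial_y W(x,\tfrac h2)=-q^+(x)$, $k\,\partial_y W(x,-\tfrac h2)=q^-(x)$. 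Exactly as in the isotropic derivation, combining these with the volume source $\tfrac1h(q^++q^-)(W-\bar W)$ makes the $\bar W$-dependence cancel, leaving the identity
\begin{align*}
&\int_0^L\!\!\int_{-h/2}^{h/2}\Big(f_\beta(|W_x|)W_x-f_\beta(|\bar W_x|)\bar W_x\Big)(W_x-\bar W_x)+k\,W_y^2\,dy\,dx \\
&\qquad=\int_0^L q^+(x)A^+(x)+q^-(x)A^-(x)\,dx,
\end{align*}
with $A^{\pm}(x)=\tfrac1h\int_{-h/2}^{h/2}\big(W(x,y)-W(x,\pm\tfrac h2)\big)\,dy$.

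To obtain the first inequality I would estimate the right-hand side. Writing $W(x,y)-W(x,\tfrac h2)=-\int_y^{h/2}W_y\,ds$ and using Fubini gives $A^+(x)=-\tfrac1h\int_{-h/2}^{h/2}(s+\tfrac h2)W_y(x,s)\,ds$ and symmetrically $A^-(x)=\tfrac1h\int_{-h/2}^{h/2}(\tfrac h2-s)W_y(x,s)\,ds$. A weighted Cauchy--Schwarz inequality yields $(A^{\pm})^2\le\tfrac12\int_{-h/2}^{h/2}(\tfrac h2\pm s)\,W_y^2\,ds$, and then Young's inequality in the form $q^{\pm}A^{\pm}\le\tfrac{h}{2k}(q^{\pm})^2+\tfrac{k}{2h}(A^{\pm})^2$, together with $(\tfrac h2+s)+(\tfrac h2-s)=h$, bounds $\int_0^L(q^+A^++q^-A^-)\,dx$ by $\tfrac{h}{2k}\int_0^L\big((q^+)^2+(q^-)^2\big)\,dx+\tfrac k4\int_{\Omega_f}W_y^2\,dy\,dx$. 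Absorbing the last term into the $k\,W_y^2$ on the left leaves a coefficient $\tfrac{3k}{4}\ge\tfrac k2$ in front of $W_y^2$, which is the first claimed estimate.

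For the second inequality I would not discard the monotone integrand $\big(f_\beta(|W_x|)W_x-f_\beta(|\bar W_x|)\bar W_x\big)(W_x-\bar W_x)$ but bound it pointwise from below, splitting according to $\Hs$. Where $\max(|W_x|,|\bar W_x|)<6\alpha^2/\beta$ (so $\Hs=0$), Lemma~\ref{NSM} gives the lower bound $\tfrac12 f_\beta\big(\max(|W_x|,|\bar W_x|)\big)(W_x-\bar W_x)^2$, and since $f_\beta$ is decreasing, $f_\beta(\max)\ge f_\beta(6\alpha^2/\beta)=\tfrac{2}{\alpha+\sqrt{25\alpha^2}}=\tfrac1{3\alpha}$, yielding the term $\tfrac k6(W_x-\bar W_x)^2(1-\Hs)$ (using the relation $k\alpha=1$). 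Where $\max(|W_x|,|\bar W_x|)\ge6\alpha^2/\beta$ (so $\Hs=1$), I would change variables $u=\tfrac{4\beta}{\alpha^2}W_x$, $v=\tfrac{4\beta}{\alpha^2}\bar W_x$; a short computation (rationalizing) shows $f_\beta(|\eta|)\eta=\tfrac{\alpha}{2\beta}\,g\!\big(\tfrac{4\beta}{\alpha^2}\eta\big)$ with $g$ the function of Lemma~\ref{g(u)}. Since $g'\le\tfrac12$ one has $(g(u)-g(v))(u-v)\ge2(g(u)-g(v))^2$, and because $\max(|u|,|v|)\ge24$, Lemma~\ref{g(u)} gives $(g(u)-g(v))^2\ge\tfrac12\big(\sqrt{|u|}\sign u-\sqrt{|v|}\sign v\big)^2$; translating back to $W_x,\bar W_x$ produces exactly the term $\tfrac k2\tfrac{\alpha^2}{\beta}\big(\sqrt{|W_x|}\sign(W_x)-\sqrt{|\bar W_x|}\sign(\bar W_x)\big)^2\Hs$. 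Inserting these two lower bounds into the first inequality (and weakening the $\tfrac{3k}{4}$ coefficient to $\tfrac k2$) gives the second estimate.

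The conceptual content is thus light; the obstacles are bookkeeping. The two that need care are: (i) tracking the outward normals and signs on $\Gamma_f^\pm$ so that the $\bar W$-dependence in the boundary/source combination genuinely cancels (this is the one place where the anisotropy is used, since $\bar W_y\equiv0$); and (ii) pinning down the numerical constants in the second inequality --- verifying the identity $f_\beta(|\eta|)\eta=\tfrac{\alpha}{2\beta}g(\tfrac{4\beta}{\alpha^2}\eta)$, checking that the threshold $6\alpha^2/\beta$ defining $\Hs$ maps precisely onto the threshold $24$ in Lemma~\ref{g(u)}, and reconciling the $k$, $\alpha$, $\beta$ factors via $\alpha=\mu/k$ and the model's normalization.
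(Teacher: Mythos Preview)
Your proposal is correct and follows essentially the same route as the paper: subtract, test with $W-\bar W$, integrate by parts, collect the $\Gamma_f^\pm$ boundary terms together with the $\tfrac1h(q^++q^-)$ source so that $\bar W$ drops out, then bound the resulting right-hand side by a Young/Poincar\'e-type estimate and absorb the $W_y^2$ contribution; for the ``Moreover'' part you invoke Lemma~\ref{NSM} on the small-gradient set and Lemma~\ref{g(u)} on the large-gradient set exactly as the paper does. Your explicit Fubini/weighted Cauchy--Schwarz treatment of $A^\pm$ is a cleaner version of what the paper summarizes as ``Cauchy and Poincar\'e inequalities'' (and in fact yields the slightly sharper absorbed constant $\tfrac{k}{4}$ rather than $\tfrac{k}{2}$), but this is a cosmetic difference, not a genuinely different argument. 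Your caveat about the normalization is well placed: the constants $\tfrac{k}{6}$ and $\tfrac{k}{2}\tfrac{\alpha^2}{\beta}$ in the statement, and the paper's identity $f_\beta(|W_x|)W_x = k\tfrac{\alpha^2}{2\beta}\,g(4\beta W_x/\alpha^2)$, all tacitly use $k\alpha=1$ (i.e.\ $\mu=1$).
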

\begin{proof}
Subtracting Eq.~\eqref{new2} from Eq.~\eqref{new1}, multiplying by $W- \bar W$ and integrating over the volume of the fracture we obtain,
\begin{align}
\int_0^L \int_{-\frac{h}{2}}^{\frac{h}{2}} \Bigg(-\frac{\partial}{\partial x}& \Big( f_\beta \left(\left| W_x \right|\right) W_x
 -f_\beta \left(\left|\bar W_x\right|\right)\bar W_x \Big)-\frac{\partial}{\partial y} k  W_y\Bigg)\left(W- \bar W\right)\, dy dx \nonumber \\
&= \int_0^L \int_{-\frac{h}{2}}^{\frac{h}{2}}  \frac{1}{h} \left(q^+(x)+q^-(x) \right)(W- \bar W )\, dy dx\,.
\end{align}
Using Green's formula (See Ref \cite{evans1997partial}) and boundary conditions \ref{bc2} and \ref{bc3}, the above equation is equivalent to
\begin{align}
I_1=I_2+I_3\,, \label{I123}
\end{align}
where
\begin{align}
I_1=&\int_0^L \int_{-\frac{h}{2}}^{\frac{h}{2}}\Big(f_\beta \left(\left|W_x \right|\right)W_x  
-   f_\beta\left( \left|\bar W_x \right| \right)\bar W_x \Big)
\left(W_x- \bar W_x \right) \nonumber \\
&\qquad \qquad +k \, W_y ^2 \,\, dy\, dx \,, \label{I1} \\
I_2= & \int_0^L \int_{-\frac{h}{2}}^{\frac{h}{2}}  \frac{1}{h} \left(q^+(x)+q^-(x) \right)(W- \bar W )\, dy dx\,, \label{I2}\\
I_3= & \int_{0}^{L}\Big( f_\beta \left(\left| W_x \right|\right) W_x  
-f_\beta\left( \left|\bar W_x \right| \right)\bar W_x \Big)\cdot \mathbf{n}^+_f(W- \bar W )\Big \rvert_{\frac{h}{2}}\,\, dy \nonumber \\
& +\int_{0}^{L}\Big( f_\beta \left(\left| W_x \right|\right) W_x  
-f_\beta\left(  \bar W_x  \right) \bar W_x \Big)\cdot \mathbf{n}^-_f(W- \bar W )\Big \rvert_{-\frac{h}{2}}dy \,.\label{I3}
\end{align}
Consider $I_3$.
Due to the boundary conditions \eqref{bc1+}, \eqref{bc1-} and since $\bar W$ is $y$ independent, we have
\begin{align}
I_3= \int_0^L  -q^+(x)(W- \bar{W} ) \big \rvert_{\frac{h}{2}} 
-q^-(x)(W- \bar{W}) \big \rvert_{-\frac{h}{2}} \,\, dx.
\end{align}
It then follows that
\begin{align}
I_2+I_3&=\int_0^L q^+(x) \int_{-\frac{h}{2}}^{\frac{h}{2}}\frac{ W(x,y) - W\left(x,\frac{h}{2}\right)}{h}\,\,dy\,dx \nonumber \\ 
&+\int_0^L q^-(x) \int_{-\frac{h}{2}}^{\frac{h}{2}}\frac{ W(x,y) - W\left(x,-\frac{h}{2}\right)}{h}\,\,dy\,dx \,. \label{I3+4}
\end{align}
Using Cauchy and Poincare inequalities we have
\begin{align}
 | I_2+I_3| \le \frac{h}{4 \varepsilon} \int_0^L  \left(q^+\right)^2 + \left(q^-\right)^2    dx +
\varepsilon \int_0^L \int_{-\frac{h}{2}}^{\frac{h}{2}} W_y^2 \,dy\, dx\,. \label{modI2+3}
\end{align}
Combining Eqs.~\eqref{I123}, \eqref{I1} and \eqref{modI2+3} and selecting $\varepsilon=k/2$, we get
\begin{align}
\int_0^L \int_{-\frac{h}{2}}^{\frac{h}{2}}\Big( f_\beta \left(\left|W_x \right|\right)W_x  
-  & f_\beta\left( \left|\bar W_x \right| \right)\bar W_x \Big)
(W_x- \bar W_x ) 
+ \frac{k}{2} W_y^2 \,\, dy\, dx  \nonumber  \\
&\le \frac{h}{2 k} \int_0^L  \left(q^-\right)^2 + \left(q^+\right)^2    dx. \label{ineq}
\end{align}
Now, using lemma \ref{g(u)} we obtain
$$ f_\beta \left(\left|W_x \right|\right)W_x = k \frac{\alpha^2}{2\beta}\left( \sqrt{1+\left| 4 \frac{\beta}{\alpha^2} W_x \right|}-1\right)\sign \left ( 4 \frac{\beta}{\alpha^2} W_x \right)=k \frac{\alpha^2}{2\beta} \; g\left ( 4 \frac{\beta}{\alpha^2} W_x \right),$$
and
$$ \frac{\partial g \left( 4 \frac{\beta}{\alpha^2} W_x \right) }{\partial W_x} \le 2 \frac{\beta}{\alpha^2}\,. $$
It then follows that
$$ |W_x -\bar W_x| \ge \frac{\alpha^2}{2 \beta} \left| g \left ( 4 \frac{\beta}{\alpha^2} W_x \right) -   g\left( 4 \frac{\beta}{\alpha^2} \bar W_x \right)\right|\,,$$
and
\begin{align}
\Big(f_\beta \left(\left|W_x \right|\right)W_x  
- &  f_\beta\left( \left|\bar W_x \right| \right)\bar W_x \Big)
(W_x- \bar W_x ) \nonumber \\
 &\ge 
k \left(\frac{\alpha^2}{2 \beta}\right)^2 \left[g \left ( 4 \frac{\beta}{\alpha^2} W_x \right) -   g\left( 4 \frac{\beta}{\alpha^2} \bar W_x \right) \right]^2 \label{dif}\,.
\end{align}
If $ \max(|W_x|,|\bar W_x|)\ge 6 \frac{\alpha^2}{\beta}$, due to Lemma \ref{g(u)}, the right hand side of the above inequality is greater than or equal to
\begin{align}
& 
k \left(\frac{\alpha^2}{2 \beta}\right)^2 
\left( \sqrt{2 \frac{\beta}{\alpha^2} \left| W_x \right|}
\sign \left( 4 \frac{\beta}{\alpha^2} W_x\right) - 
\sqrt{2 \frac{\beta}{\alpha^2} \left|\bar W_x \right|}
\sign \left(4 \frac{\beta}{\alpha^2}\bar W_x\right)\right)^2\nonumber\\
&=
\frac{k}{2} \frac{\alpha^2}{\beta} 
\left( 
\sqrt{ \left| W_x \right|} \sign \left(  W_x\right) - 
\sqrt{ \left|\bar W_x \right|} \sign \left(\bar W_x\right)
\right)^2 \,. \label{greater}
\end{align}
On the other hand, if $ \max(|W_x|,|\bar W_x|)< 6 \dfrac{\alpha^2}{\beta}$, due to Lemma \ref{NSM}, we have
\begin{align}
\Big(f_\beta \left(\left|W_x \right|\right)W_x  
-&   f_\beta\left( \left|\bar W_x \right| \right)\bar W_x \Big)
(W_x- \bar W_x ) \nonumber \\
& \geq \frac{1}{2} f_\beta \left(6 \frac{\alpha^2}{\beta}\right) \left(
 W_x- \bar W_x\right)^2 
  = \frac{k}{6}  \left(
 W_x- \bar W_x\right)^2\,. \label{less}
\end{align}
Combining Eqs.~\eqref{dif}, \eqref{greater} and \eqref{less} we obtain
\begin{align}
&\int_0^L \int_{-\frac{h}{2}}^{\frac{h}{2}}\left[f_\beta \left(\left|W_x \right|\right)W_x  
-   f_\beta\left( \left|\bar W_x \right| \right)\bar W_x \right]
(W_x- \bar W_x ) \, dy\, dx  \nonumber \\
&\geq \frac{k}{2}  \int_0^L \int_{-\frac{h}{2}}^{\frac{h}{2}}  \frac{\alpha^2}{\beta}\left( \sqrt{\left| W_x \right|}\sign 
\left( W_x\right) - \sqrt{\left|\bar W_x \right|}\sign \left(\bar W_x\right)  \right)^2 
\Hs\left(W_x, \bar W_x \right)\,\, dy\, dx \nonumber \\
&\quad \quad+\frac{k}{6}\int_0^L \int_{-\frac{h}{2}}^{\frac{h}{2}} \left(
 W_x- \bar W_x\right)^2 \left( 1 -\Hs\left(W_x, \bar W_x \right) \right)\,\, dy\, dx\,. \label{Hs}
\end{align}
where $\Hs$ was defined in \eqref{Hsdef}.
Finally taking into account Eqs.~\eqref{ineq} and \eqref{Hs}, it follows that
\begin{align}
&\int_0^L \int_{-\frac{h}{2}}^{\frac{h}{2}}  \frac{k}{2} \frac{\alpha^2}{\beta}  \left( \sqrt{\left| W_x \right|}\sign 
\left( W_x\right) - \sqrt{\left|\bar W_x \right|}\sign \left(\bar W_x\right)  \right)^2 
\Hs\left(W_x, \bar W_x \right) \nonumber \\
& \qquad \quad  +\frac{k}{6}\left(
 W_x- \bar W_x\right)^2 \left( 1 -\Hs\left(W_x, \bar W_x \right) \right) 
+ \frac{k}{2}\,  W_y^2 \,\, dy\, dx \nonumber \\
 & \qquad \qquad \leq
 \frac{h}{2 k} \int_0^L  \left(q^+\right)^2 + \left(q^-\right)^2  dx \,.
\end{align}
\end{proof}
\begin{remark}
From the theorem above, it follows that in the considered anisotropic case, the estimate for the difference 
between the solutions of the original problem and the reduced problem does not depend on the size of the fracture thickness $h$. 
Taking into account that each individual solution will diverge as $h$ goes to zero, with order $h^{-s}$, for some $s>0$,
this represents a much stronger estimate with respect to the one obtained in the isotropic case.  
\end{remark} 

%
%
%
%
%
%

 \section{Optimization of the Fracture Length}
In this section, we investigate the optimal fracture length that gives the maximum diffusive capacity/PI for a reservoir domain with a horizontal line fracture, using the reduced fracture model obtained in the previous section. 
In particular, for a given pressure drawdown, PDD, we aim to find 
the optimal length of the fracture for which the diffusive capacity is maximized.

In natural reservoirs, the pressure drawdown is a physical characteristic of the oil recovery.
It is observed on the field that for constant production rate $Q$, after a short transition, the PDD
remains constant for a long time. This is the case idealized in the pseudo-steady state regime previously introduced.
Namely, PDD equals 
\begin{align}
PDD & = \bar p_\Omega(t)- \bar p_w(t) \nonumber \\
&=\frac{1}{\|\Omega\|}\int_\Omega p\, d\Omega-\frac{1}{\|\Gamma_w\|} \int_{\Gamma_w} p\, dx. \nonumber 
\end{align}
and using Eq.~\eqref{soln} yields
\begin{align}
PDD & =\frac{1}{\|\Omega\|}\int_\Omega \left(W - \gamma A + K\right) d\Omega
-\frac{1}{\|\Gamma_w\|} \int_{\Gamma_w} \left(W - \gamma A + K\right) dx \nonumber.
\end{align}
Using the boundary condition on the well, 
Eq.~\eqref{well pressure}, it follows that
\begin{align}
PDD &= \frac{1}{\|\Omega\|} \int_{\Omega} W d\Omega = const\nonumber.
\end{align}
Then  using Eq.~\eqref{DC}, the diffusive capacity (productivity index) becomes 
\begin{equation}
J_p=\frac{Q}{PDD} = \frac{Q}{\frac{1}{\|\Omega\|} \int_{\Omega} W d\Omega}\,. \label{diff_cap}
\end{equation}

For a given fracture length $L$ (for a given geometry), 
the value of $Q$ that provides a prescribed value of the 
pressure drawdown is unknown. 
We need to solve the inverse problem:
find $Q$ such that the solution of the system~\eqref{fin porous pressure}-\eqref{no flux}
gives a prescribed value of PDD.

To solve this, the following algorithm  described in \cite{aulisa2014numerical,aulisa2015practical} can be used.

\subsection{Set point control algorithm} \label{setpoint}
The following algorithm is very general and provides a recipe to easily solve
inverse problems as the one described above. All the details can be found in \cite{aulisa2014numerical,aulisa2015practical}.
Here we just sketch a constructive proof of the algorithm that we adapt for our purpose.
Consider the following {\it single input single output} (SISO) abstract system
\begin{align}
Az+Fz+B_{in}\gamma=0\,, \label{control eqn}\\
y=Cz\,,\label{Y}
\end{align}
where $z$ is the state variable, 
$A$ is some linear homogeneous operator, 
$F$ is some non linear function, $B_{in}$ is the input operator, 
and $C$ is the output operator.

The objective is to find $\gamma$ for Eq.~\eqref{control eqn} 
such that 
\begin{equation}
Cz =y_r, 
\end{equation}
for a given target $y_r$.
We assume that such a $\gamma$ exists.

\begin{lemma}
The solution of the system
\begin{equation}
\left\{
\begin{array}{r l}
AX+B_{in} &=0  \label{setpointsystem}\\ 
Az+Fz+B_{in}\gamma&=0  \\
A\tilde{z}+Fz&=0
\end{array}
\right. , 
\end{equation}
with 
\begin{equation}
\gamma = \dfrac{y_r-C\tilde{z}}{CX} \label{gamma}, 
\end{equation}
satisfies the constraint $$y_r=Cz\,.\label{target}$$
\end{lemma}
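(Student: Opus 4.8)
The plan is to verify the claim by direct substitution, exploiting the linearity and homogeneity of $A$. The key observation is that the three equations in system~\eqref{setpointsystem} are arranged so that a suitable linear combination reconstructs the original control equation~\eqref{control eqn} with the prescribed $\gamma$ from~\eqref{gamma}. First I would note that the second equation of~\eqref{setpointsystem} is exactly the control equation~\eqref{control eqn}, so $z$ is the genuine state produced by the (unknown-in-closed-form but assumed-to-exist) control value $\gamma$; the point of the lemma is that $z$ can be computed via the auxiliary states $X$ and $\tilde z$ together with the explicit formula for $\gamma$. So the real content is: show that $\gamma X + \tilde z$ solves the same equation that $z$ solves, hence by uniqueness $z = \gamma X + \tilde z$, and then apply $C$.

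The main steps, in order: (1) From $AX + B_{in} = 0$ multiply by the scalar $\gamma$ and use homogeneity/linearity of $A$ to get $A(\gamma X) + B_{in}\gamma = 0$. (2) Add the third equation $A\tilde z + Fz = 0$ to obtain $A(\gamma X + \tilde z) + Fz + B_{in}\gamma = 0$. (3) Compare with the second equation $Az + Fz + B_{in}\gamma = 0$: subtracting gives $A(\gamma X + \tilde z - z) = 0$, and invoking whatever well-posedness/uniqueness is available for $A$ (with the homogeneous boundary data implicit in the operator), conclude $z = \gamma X + \tilde z$. (4) Apply the output operator $C$: $Cz = \gamma\, CX + C\tilde z$, and substitute $\gamma = (y_r - C\tilde z)/(CX)$ to get $Cz = y_r - C\tilde z + C\tilde z = y_r$, which is the target. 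One should also record the mild nondegeneracy hypothesis $CX \neq 0$ needed for~\eqref{gamma} to make sense.

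I expect the only genuine obstacle to be step (3): the passage from $A(\gamma X + \tilde z - z) = 0$ to $\gamma X + \tilde z = z$ requires that the homogeneous problem $A w = 0$ (with the boundary conditions baked into the definition of $A$ and with $F$ not appearing, since $Fz$ is the \emph{same} fixed datum in the equations for $z$ and for $\tilde z$) has only the trivial solution — or, as in our PDE setting~\eqref{fin porous pressure}--\eqref{no flux}, only the trivial solution up to the additive constant that is already factored out by the normalization $W_p = \bar W_f = 0$ on $\Gamma_w$. Since $F$ evaluated at the common state $z$ is treated as a given source, the comparison is between two \emph{linear} problems with the same right-hand side, so this reduces to uniqueness for the linear operator $A$, which is exactly the setting in which the set-point algorithm of~\cite{aulisa2014numerical,aulisa2015practical} operates. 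Everything else is bookkeeping with the scalar $\gamma$ pulled through the linear operators $A$ and $C$.
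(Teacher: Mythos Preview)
Your proof is correct and follows essentially the same line as the paper's: the paper writes $X=-A^{-1}B_{in}$, $z=-A^{-1}(Fz+B_{in}\gamma)$, $\tilde z=-A^{-1}(Fz)$ and then computes $y_r=C\tilde z+(CX)\gamma=-CA^{-1}(Fz+B_{in}\gamma)=Cz$, which is precisely your identity $z=\gamma X+\tilde z$ followed by application of $C$. The only cosmetic difference is that the paper invokes invertibility of $A$ via the symbol $A^{-1}$, whereas you phrase the same hypothesis as uniqueness for the homogeneous problem $Aw=0$; your explicit mention of the nondegeneracy $CX\neq 0$ is a welcome addition.
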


\begin{proof}
Using the first equation in the system~\eqref{setpointsystem}, we have 
\begin{equation}
X=-A^{-1}B_{in}\,.\label{lin soln}
\end{equation}
Let
\begin{align}
G&=CX = -C A^{-1}B_{in}\,.\label{G}
\end{align}
From the second and third equations in the system~\eqref{setpointsystem},
we have 
\begin{align}
z=-A^{-1}(F z+B_{in}\gamma)\, \label{sln}\,,\\
\tilde{z} =-A^{-1}(Fz) \,. \label{Z tilde}
\end{align} 
From Eqs.~\eqref{gamma}, \eqref{G} and \eqref{Z tilde}, we have 
\begin{equation}
y_r=C\tilde{z}+G\gamma =  -CA^{-1}(Fz+B_{in}\gamma). \label{y_r}
\end{equation} 
Finally, substituting Eq.~\eqref{sln} in the above equation we get
the desired identity
\begin{equation}
y_r=Cz\,.
\end{equation} 
\end{proof}

\subsection{Solution Strategy}\label{Sol_Str}

\begin{enumerate}
\item \label{item 1}
Consider the reservoir-well domain with no fracture ($L=0$). Let $W^0_p$ be the solution of the PDE system~\eqref{fin porous pressure} - \eqref{no flux} when $L=0$. Then the system reduces to
\begin{align}
\nabla\cdot\left(-k_p\nabla W^0_p\right) &= \frac{Q}{\|\Omega\|} &\mbox{ on } \Omega_p\,,\\
-k_p\nabla W^0_p\cdot \mathbf{n_p} &=0 &\mbox{ on } \Gamma_{out}\,,\\
W^0_p &=0 & \mbox{ on } \Gamma_w\,.
\end{align}
Set
$$PDD^* = \frac{1}{\|\Omega\|} \int_{\Omega_p} W^0_p d\Omega .$$
\item \label{item 2}
Consider the reservoir-well system with fracture length equal to $L$. 
Let $W(L,Q)$ be the solution of system~\eqref{fin porous pressure} - \eqref{no flux}
for given $Q$. 
The inverse problem consists in finding $Q$ such that
$PDD(L,Q) = PDD^*$.
 
Let $k_p$ be the permeability of porous medium, 
$k_f$ be the linear part of the permeability of the fracture, 
and $f_\beta$ be defined as in Eq.$\eqref{non lin term}$.
In order to use the set point algorithm described in section \ref{setpoint},
we need to introduce certain operators. 

The homogeneous linear operator $A$, applied to
the state variable $\Psi = \left\{ \Psi_p, \Psi_f \right\}$, corresponds to
\begin{equation}
A(\Psi)=
\left\{
\nabla\cdot(-k_p\nabla \Psi_p) ,\;
\jump{- k_p \frac{\partial \Psi_p}{\partial y}(x,0)}   +\, h\nabla\cdot(-k_f\nabla \Psi_f) \\ 
\right\}, 
\end{equation}
with domain
\begin{align}
D&(A)= \big\{  \Psi_p \in H^1(\Omega_p), \, \Psi_f \in H^1(\Gamma_f) : 
 \; \left. \Psi_p \right|_{\Gamma_f} = \Psi_f, \; \nonumber \\
&  \left. -k_p \nabla \Psi_p \cdot \mathbf{n}_p \right|_{\Gamma_{out}}  =0, 
\left. \Psi_f\right|_{\Gamma_w} =0,\; 
\left. -k_f \partial_x \Psi_f \right|_{\Gamma_{f_{out}}} = 0 \big\}.
\end{align}

The non linear operator $F$ corresponds to
\begin{align}
F(\Psi) =  \left\{ 0 , \; 
-\, h\nabla\cdot(-k_f\nabla \Psi_f) + \, h\nabla\cdot(-f_{\beta}\nabla \Psi_f)
\right\}.
\end{align}

The input operator $B_{in}$ corresponds to
\begin{equation}
B_{in} =
\left\{
 - \frac{1}{\|\Omega\|} ,\,
 -  \frac{h}{\|\Omega\|}
\right\}\,.
\end{equation}

The output operator $C$ corresponds to
\begin{align}
C(\Psi) & = \frac{1}{\|\Omega\|} \int_\Omega \Psi\,d\Omega \nonumber \\
& = \frac{1}{ \|\Omega_p\| + h\,L } \left(\int_{\Omega_p} \Psi_p\, d\Omega  + h \int_{\Gamma_f} \Psi_f dx 
\right).
\end{align}

These operators will enable us to solve the system \eqref{setpointsystem}, where we set $y_r = PDD^*$.

\end{enumerate}

The diffusive capacity $J_p$ will be determined using Eq.~\eqref{diff_cap}, for a given fracture length $L$ and for a given value of $\beta$. We investigate this problem for different reservoir geometries, different fracture lengths and different values of $\beta$.

\subsection{Numerical Simulations}

In this section, we evaluate the diffusive capacity for different reservoir geometries with a horizontal planar fracture. We analyze how the diffusive capacity behaves as the fracture length $L$ and the parameter $\beta$ change. All the simulations have been performed using the COMSOL Multiphysics software. 

\subsubsection{Cylindrical reservoir with horizontal planar fracture} \label{cyl res}
 The following is an analysis of the diffusive capacity for a horizontal planar fracture in a cylindrical reservoir. 
\begin{figure}[!h]
\begin{center}
	\includegraphics[scale=0.35]{./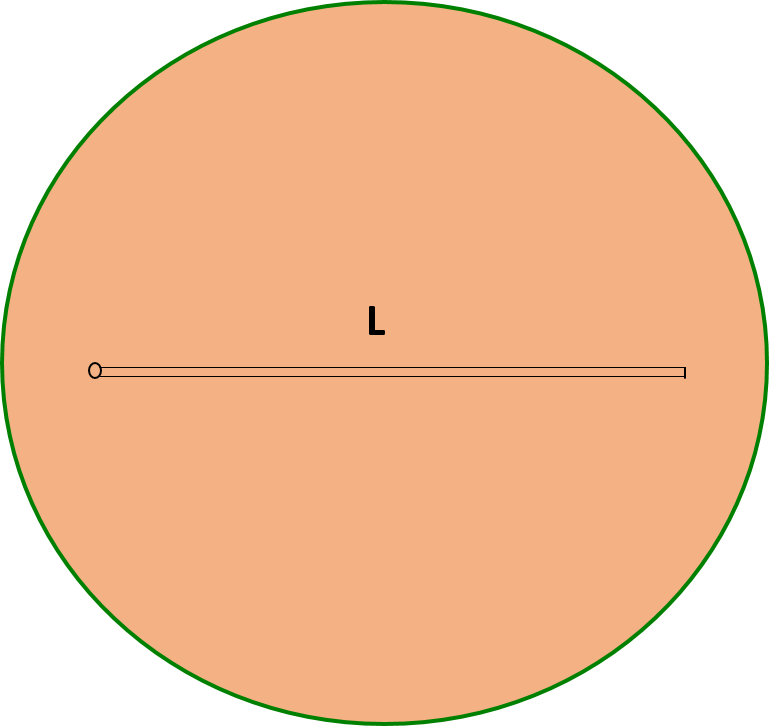}
	\caption{Horizontal planar fracture in a cylindrical reservoir}	
\end{center}
\end{figure}

We use the solution strategy described in the previous section to solve the problem of finding the production rate, which results 
in a desired pressure drawdown. Next, we calculate the corresponding diffusive capacity. 
We analyze how the diffusive capacity varies, while the fracture length and the parameter $\beta$ change.

First, we consider the reservoir-well domain with no fracture and set Q=1000. 
Solving the system in step \ref{item 1} of Section \ref{Sol_Str}, 
we obtain the value of the pressure drawdown, as $PDD^*=988.06$. The diffusive capacity for this value is $$J^*_p=\frac{Q}{PDD^*}=1.0121.$$ 
Then, we consider the reservoir-well system with fracture, where both $L$ and $\beta$ change.
Following the procedure in step \ref{item 2} of Section \ref{Sol_Str} and imposing the pressure drawdown PDD($L,\,\beta$)=PDD*, 
we evaluate the diffusive capacity $J_p(L,\,\beta)$ using Eq.~\eqref{diff_cap}.

\begin{table}[h]
	\begin{center}
		\caption{Diffusive capacity for a cylindrical reservoir.}
		\begin{tabular}{|c|c|c|c|c|c|}
		\hline
		\backslashbox[2cm]{\hspace{0.5mm}$\beta$}{\vspace{-4mm}L} & 10 & 20 & 30 & 40 & 50  \\
		\hline
		1.0E-05 &2.2345  &2.6668  &2.9045  &3.0218  &3.074\\
		1.0E-04 &1.8451  &1.9418  &1.9868  &1.9972  &1.9957\\
		1.0E-03 &1.4121  &1.4163  &1.4287  &1.4235  &1.4158\\
		1.0E-02 &1.1911  &1.1834  &1.1964  &1.1898  &1.182\\
		1.0E-01 &1.1524  &1.1451  &1.1627  &1.1566  &1.1483\\		
		\hline		
	        \end{tabular}
	\end{center}
\end{table}

It can be observed that, as the length of the fracture increases, the diffusive capacity (productivity index) increases. However, as the non linear coefficient term $\beta$ increases, the diffusive capacity decreases. For large values of $\beta$ for which the 
nonlinear term dominates, the diffusive capacity saturates - even if the length $L$ of the fracture is increased.

\subsubsection{Rectangular reservoir with horizontal planar fracture} \label{rec res}
The following is an analysis of the diffusive capacity for a horizontal planar fracture in a rectangular reservoir. 
\begin{figure}[h]
\begin{center}
	\includegraphics[scale=0.3]{./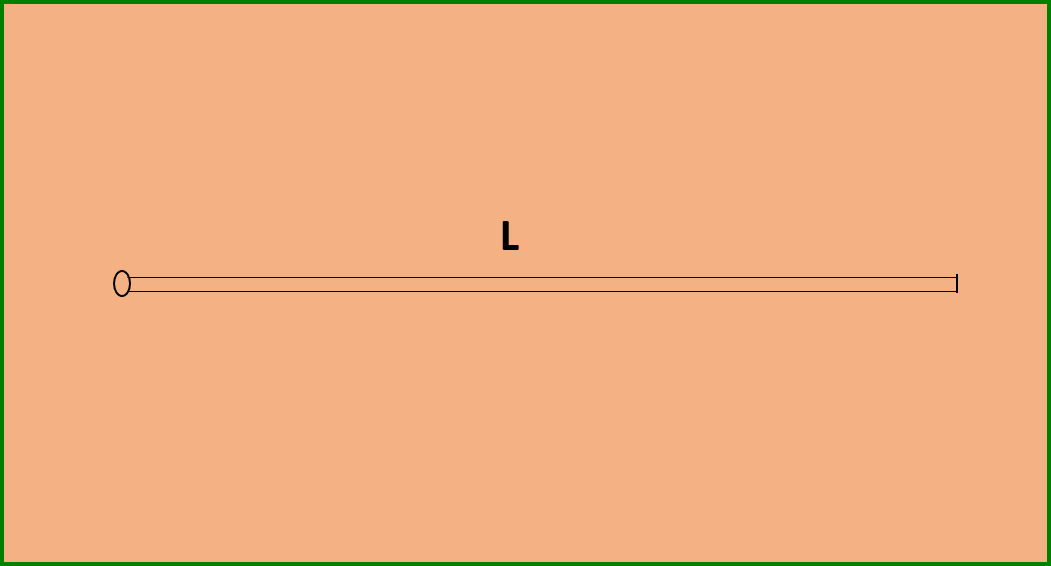}
	\caption{Horizontal planar fracture in a rectangular reservoir}	
\end{center}
\end{figure}

We set Q=1000 and carry out the same procedure as in \ref{cyl res}. For the reservoir with no fracture, we obtain the pressure drawdown equal to $PDD^*=1023.9$.
The diffusive capacity for this value is $J^*_p=0.97664$.
Then we consider the reservoir with fracture and calculate the diffusive capacity $J_p(L.\beta)$ while the length $L$ of the fracture and the parameter $\beta$ change.

\begin{table}[h]
	\begin{center}
		\caption{Productivity Index for a rectangular reservoir.}
		\begin{tabular}{|c|c|c|c|c|c|}
		\hline
		\backslashbox[2cm]{\hspace{0.5mm}$\beta$}{\vspace{-4mm}L} & 10 & 20 & 30 & 40 & 50  \\
		\hline
		1.0E-05 &2.1848  &2.7026  &2.9917  &3.1925  &3.2631 \\
		1.0E-04 &1.8265  &1.9938  &2.0333  &2.0829  &2.0718 \\
		1.0E-03 &1.4154  &1.5039  &1.4806  &1.5077  &1.4838 \\
		1.0E-02 &1.2155  &1.3266  &1.2724  &1.3022  &1.2735 \\
		1.0E-01 &1.2635  &1.3829  &1.2812  &1.3196  &1.284  \\		
		\hline		
	        \end{tabular}
	\end{center}
\end{table}

We can see that the diffusive capacity increases while increasing the length of the fracture, but decreases when the nonlinear coefficient term $\beta$ increases. Also, it can be observed that, for large values of $\beta$ for which the nonlinear term dominates, the diffusive capacity saturates even if the length of the fracture increases.

\section{Conclusion}
In this paper, we investigated Forchheimer flows in bounded reservoir domains with fractures. Generalizing our previous work and characterizing the constraints on the flow, we formulated an IBVP that describes the oil filtration process of fractured reservoir domain. We developed a mathematical approach to identify the productivity index that measures the efficiency of the well. We introduced the notion of {\it diffusive capacity}, as an integral functional over a solution of the IBVP. We established a steady state auxiliary problem which gives a time invariant solution for the IBVP and obtained the time invariant diffusive capacity.

We analyzed the difference between the solution of the original problem with 2-D flow inside the fracture and the one-dimensional one in appropriate norms, for two types of flows, isotropic and anisotropic. For the isotropic case we proved the closeness of the solutions of the two problems. In particular, we showed that, for a given fracture domain, the difference between the solutions of the two problems can be controlled by the boundary data. 

For anisotropic flow, where non-linear Forchheimer equation holds only along the fracture, while the flow remains linear in the direction perpendicular to the fracture, we  proved a stronger result. Namely,  even if the individual solutions are unbounded as the fracture thickness approaches zero, the difference between the solutions of the two problems is bounded, and does not depend on the size of the fracture thickness. Therefore we validate the introduction of the reduced model and conclude that the reduced one dimensional model of the 
fracture provides an accurate solution to approximate the actual 2-D flow, thus can be used in whole domain reservoir simulators.

We developed an approach to investigate the optimal fracture length that maximizes the diffusive capacity using a set point control algorithm. We considered different reservoir geometries, and analyzed how the diffusive capacity changes  while the fracture length and the Forchheimer  $\beta$ factor  vary. Numerical simulations confirmed that the diffusive capacity increases with increasing fracture length, but decreases with increasing $\beta$ values. Moreover, we observed that, for large values of $\beta$ for which the nonlinear term dominates, the diffusive capacity saturates even if the fracture length increases. Therefore, we conclude that increasing the fracture length after the saturation point is a waste of resources.

Even though this analysis was performed for a simple model, the  methods we employed in this paper  can be used for more complex problems, involving nonlinear flows in complex fractured geometries, which will be investigated in our future work.

\section{Acknowledgments}
This work was supported by the National Science Foundation grant DMS-1412796.

\section*{References}
\bibliographystyle{plain}
\bibliography{pushpi}
\end{document}